\documentclass[a4paper,fleqn]{cas-sc}

    \usepackage[numbers,sort&compress]{natbib}

\usepackage[utf8]{inputenc} 
\usepackage{accents}				
\usepackage{todonotes}
\usepackage{subcaption}
\usepackage{stmaryrd} 
\usepackage{amsthm} 
\usepackage[makeroom]{cancel} 
\usepackage{bookmark} 
\usepackage{empheq}
\usepackage{ulem}
\usepackage{booktabs} 
\usepackage{siunitx}
\usepackage[shortcuts]{extdash}

\newcommand{\red}[1]{{\leavevmode\color{red}#1}}

\newcommand{\norm}[1]{\left\lVert#1\right\rVert}
\newcommand{\mat}[1]{\underline{\mathbf{#1}}}
\newcommand\acclrvec[1]{\accentset{\,\leftrightarrow}{#1}}	
\newcommand{\blocktensor}[1]{\acclrvec{{\mathbf #1}}}	
\newcommand\blocktensorG[1]{\acclrvec{\boldsymbol #1}}	
\newcommand{\Nabla} {\vec{\nabla}}
\newcommand{\numfluxb}[1]{\hat{\mathbf{#1}} }
\newcommand{\numflux}[1]{\hat{{#1}} }
				
\newcommand{\ec}{{\mathrm{EC}}}			
	
\def\polydeg{N}

\newcommand{\diag}{\mathrm{diag}}

\newcommand{\bigpartialderiv}[2]{ \frac{\partial {#1}}{\partial {#2} } }

\newcommand\stateG[1]{\boldsymbol #1}

\newcommand\state[1]{\mathbf{#1}}
\newcommand{\contSt}[1]{\tilde{\state{#1}}}

\newcommand{\elemDom}{K}	

\newcommand{\avg}[1]{\left\{\hspace*{-3pt}\left\{#1\right\}\hspace*{-3pt}\right\}}

\newcommand{\jump}[1]{\ensuremath{\left\llbracket #1 \right\rrbracket}}

\newtheorem{remark}{Remark}
\newtheorem{lemma}{Lemma}
\newtheorem{corollary}[lemma]{Corollary}

\usepackage{xcolor}
\usepackage[dvipsnames]{xcolor}

\usepackage{soul}

\begin{document}

\newcommand{\diffcomment}[1]{%
    \ifdefined\DIFadd
        {\color{orange} #1}
    \fi
}

\let\WriteBookmarks\relax
\def\floatpagepagefraction{1}
\def\textpagefraction{.001}

\shorttitle{GPU-Accelerated Nested Schur Complement for Atmospheric HEVI-DG}
\shortauthors{Artiano et al.}

\title [mode = title]{GPU-Accelerated Nested Schur-Complement Solvers for Atmospheric Flow Simulations Using Horizontally-Explicit, Vertically-Implicit Schemes and Discontinuous Galerkin Methods}

\author[1]{Marco Artiano}[orcid=0009-0009-5872-702X]
\cormark[1]
\ead{martiano@uni-mainz.de}
\cortext[cor1]{Corresponding author}

\author[2]{Andrés M. Rueda-Ramírez}[orcid=0000-0001-6557-9162]

\author[3]{Oswald Knoth}

\author[1]{Hendrik Ranocha}[orcid=0000-0002-3456-2277]

\author[4,5]{Gregor Gassner}[orcid=0000-0002-1752-1158]

\address[1]{Institute of Mathematics, Johannes Gutenberg University Mainz, Mainz, Germany}

\address[2]{ETSIAE-UPM, School of Aeronautics, Universidad Politécnica de Madrid, Madrid, Spain}

\address[3]{Leibniz Institute for Tropospheric Research (TROPOS), Leipzig, Germany}

\address[4]{Department of Mathematics and Computer Science, University of Cologne, 
Cologne, Germany}

\address[5]{Center for Data and Simulation Science, University of Cologne, 
Cologne, Germany}




\begin{abstract}
Horizontally explicit, vertically implicit (HEVI) time integration removes the acoustic time-step restriction imposed by the fine vertical resolution of nonhydrostatic atmospheric models, but its efficiency depends on the fast solution of the resulting vertical linear systems, particularly on GPU-accelerated hardware.
In this work, we present GPU-accelerated nested Schur-complement solvers for these systems for a split-form discontinuous Galerkin spectral element method on Legendre--Gauss--Lobatto (LGL) nodes applied to the compressible Euler equations with gravity.

Time integration is performed with a Rosenbrock-W method, a linearly implicit Runge--Kutta scheme that allows the use of approximate Jacobians while maintaining high-order accuracy and robust stability properties for stiff atmospheric wave dynamics.
We exploit this freedom to construct a sparse vertical Jacobian that captures vertically propagating acoustic waves and gravity on curvilinear meshes, including orography, and that decouples into independent systems for each vertical column.

The solvers are \emph{nested}: a global Schur complement, known as static condensation, exploits the sparsity of the discretization on LGL nodes to condense each column system onto a small set of element-edge unknowns, and a local Schur complement eliminates, within each element, all prognostic variables but the vertical momentum.
The local elimination benefits from the sparsity that arises naturally when the thermodynamic equation is written in terms of potential temperature, since the pressure then depends on a single prognostic variable.
We propose two global condensation strategies, which assign the edge unknowns differently and lead to a banded and a block-tridiagonal reduced system, respectively, and analyze their structure and cost.
The resulting HEVI schemes conserve the total mass and the total potential-temperature  exactly up to round-off errors.
The solvers are implemented for CPUs and GPUs in TrixiAtmo.jl, the atmospheric extension of Trixi.jl.

We verify the high-order convergence of the method with linear inertia--gravity waves in a two-dimensional channel, and we demonstrate its robustness on the sphere with standard test cases: the baroclinic instability, a mountain-induced baroclinic instability, and the gap-flow and vortex-shedding cases of DCMIP-2025.
For the baroclinic instability on a sphere without orography, the scheme achieves $18.8$ and $2.9$ simulated years per day at $\SI{100}{km}$ and $\SI{50}{km}$ horizontal resolution, respectively, on a single NVIDIA GH200 GPU in double precision.

\end{abstract}



\begin{keywords}
Discontinuous Galerkin methods \sep 
Static Condensation \sep 
Atmospheric Modeling \sep 
IMEX Discretizations \sep 
HEVI Methods \sep 
GPU Acceleration \sep
Schur complement
\end{keywords}

\maketitle

\section{Introduction}\label{sec:intro}

Numerical weather prediction and climate models are moving toward kilometer-scale global resolutions, which have only recently become feasible on GPU-accelerated supercomputers \cite{Giorgetta2022,Hohenegger2023,Taylor2023,Donahue2024,lapillonne2026operational,stevens2019dyamond,satoh2019global}.
At these scales, the hydrostatic approximation is no longer valid and the fully compressible, nonhydrostatic Euler equations must be solved \cite{satoh2019global,stevens2019dyamond}.
The vertical grid spacing of such models is typically one to two orders of magnitude smaller than the horizontal one, so the time step of a fully explicit scheme is dictated by vertically propagating acoustic waves, which are of little meteorological relevance.
Horizontally explicit, vertically implicit (HEVI) schemes remove this restriction by treating only the terms that are responsible for the vertical propagation of acoustic waves implicitly \cite{ikawa1988comparison,skamarock2008time,satoh2008nicam,skamarock2012multiscale,zaengl2015icon,giraldo2013implicit,gardner2018imex}.
The implicit problem then decouples into independent one-dimensional systems, one for each vertical column, while the time step of the explicit part is limited only by the horizontal grid spacing \cite{giraldo2013implicit,abdi2019acceleration,vogl2019evaluation,reddy2023schur}.
In this paper, we present nested Schur-complement solvers for these column systems, which exploit the structure of high-order discontinuous Galerkin (DG) discretizations on Legendre--Gauss--Lobatto (LGL) nodes and the freedom in the choice of the Jacobian offered by Rosenbrock-W time integration methods, and which are designed for GPU architectures.

High-order element-based Galerkin methods have shown clear promise for atmospheric dynamical cores.
Prominent examples are the continuous and discontinuous Galerkin formulations of the nonhydrostatic unified model of the atmosphere (NUMA) \cite{GiraldoRestelli2008,giraldo2013implicit}, the spectral-element dynamical core CAM-SE/HOMME \cite{Dennis2012}, whose GPU implementation in the Simple Cloud-Resolving E3SM Atmosphere Model (SCREAM) has reached cloud-resolving resolutions on an exascale system \cite{Taylor2023,Donahue2024}, and the DG-based ClimateMachine \cite{Sridhar2022climatemachine}.
DG methods, in particular, combine high-order accuracy on curvilinear meshes with a compact stencil, which makes them well suited for massively parallel and GPU architectures \cite{abdi2019acceleration,candelaresi2026massivelyparallelnumericalsimulations,ntoukas2026horses3d,waterhouse2026gpu}.

Among DG methods, the DG spectral element method (DGSEM) on LGL nodes is a particularly efficient variant, which uses the same LGL nodes for interpolation and numerical integration \cite{black1999conservative,kopriva2009implementing}. 
The resulting differentiation operators satisfy the summation-by-parts (SBP) property \cite{gassner2013skew}, which allows the construction of so-called split-form (or flux-differencing) DGSEM schemes with structure-preserving nonlinear stability properties.
For the Euler equations, these include entropy stability (i.e., the discrete fulfillment of the second law of thermodynamics) \cite{fisher2013high,chandrashekar2013kinetic,ranocha2018generalised}, kinetic energy preservation \cite{gassner2016split,chandrashekar2013kinetic,ranocha2018generalised,ranocha2020entropy}, and pressure equilibrium preservation \cite{shima2021preventing,ranocha2022preventing}, among others.
These nonlinear stability guarantees considerably improve the robustness of high-order DG methods for under-resolved flows \cite{ChanRanochaRuedaRamirez2022,gassner2016split,shima2021preventing,rueda2021subcell,ranocha2018generalised}, and the resulting schemes can be implemented efficiently \cite{Ranocha2023,waterhouse2026gpu}.
For atmospheric flows, an additional requirement is the preservation of hydrostatic equilibria, which has motivated the development of well-balanced DG methods for the Euler equations with gravity \cite{li2016well,ChandrashekarZenk2017,WuXing2021}.
\citet{waruszewski2022entropy} combined entropy stability and well-balancedness in split-form DG discretizations of the Euler equations with gravity, and \citet{artiano2025structure,artiano2026entropy} recently extended these ideas to the potential-temperature formulation and to the vector-invariant form of the Euler equations, {as well as well-balanced schemes on arbitrary curvilinear meshes~\cite{artiano2025structure}.}
Related structure-preserving SBP discretizations have been developed for the shallow water equations, both in planar geometries \cite{wintermeyer2017entropy,ersing2024entropy,ersing2024hydrostatic} and on the sphere \cite{RicardoDuruLee2024b,Montoya2026}, and for the moist compressible Euler equations \cite{RicardoLeeDuru2024c}.
Compatible finite element methods achieve related properties with a different approach, based on finite element exterior calculus \cite{cotter2023compatible,melvin2019mixed}.

HEVI time integration has a long tradition in nonhydrostatic atmospheric models.
\citet{ullrich2012operator} integrated the vertical acoustic modes implicitly with an operator-split Runge--Kutta--Rosenbrock scheme, and \citet{weller2013runge} and \citet{colavolpe2017rk} analyzed implicit-explicit (IMEX) Runge--Kutta methods for HEVI splittings.
\citet{gardner2018imex} compared several IMEX additive Runge--Kutta methods and splittings in a global nonhydrostatic model and found that treating horizontal dynamics implicitly does not pay off due to the cost of the globally coupled systems, whereas the HEVI splitting that treats all vertical dynamics implicitly performed best.
For the nonhydrostatic spectral-element dynamical core HOMME-NH, \citet{vogl2019evaluation} evaluated a large number of IMEX Runge--Kutta methods, and \citet{SteyerVoglTaylorGuba2019} derived new methods with large stability regions for HEVI splittings.
\citet{lee2021exact} constructed a HEVI scheme with an exact balance of energy exchanges.
In the context of element-based Galerkin methods, \citet{giraldo2013implicit} analyzed IMEX formulations of NUMA, including a one-dimensional vertically implicit variant, and \citet{abdi2019acceleration} accelerated these formulations on manycore processors.
\citet{KoperaGiraldo2014} combined IMEX DG discretizations with adaptive mesh refinement, \citet{baldauf2021horizontally} developed a HEVI DG scheme for terrain-following coordinates, and \citet{reddy2023schur} derived Schur-complement IMEX formulations for DG discretizations.
Multirate schemes, such as the multirate infinitesimal step methods of \citet{wensch2009multirate}, offer an alternative that integrates the fast modes with smaller time steps.

An alternative to IMEX Runge--Kutta methods are the linearly implicit methods introduced by \citet{rosenbrock1963some} in 1963, which require the solution of a single linear system per stage instead of a nonlinear system \cite{kaps1979generalized,hairer1996solving,lang2021rosenbrock}.
\citet{steihaug1979attempt} generalized these methods to the so-called W-methods, whose order conditions hold for any approximation of the Jacobian \cite{norsett1979order}.
Rosenbrock-type schemes have been used in atmospheric models, e.g., in the operator-split scheme of \citet{ullrich2012operator}, in the All Scale Atmospheric Model (ASAM) of \citet{jahn2015asam}, where a linearly implicit Rosenbrock method ensures stability in the presence of fast sound waves, and by \citet{lee2024comparison}, who reported improved energetics and stability with respect to a Crank--Nicolson semi-implicit integrator.
They have also been combined with high-order DG discretizations of the compressible Navier--Stokes equations by \citet{bassi2015linearly} and \citet{liu2016comparative}.
Since the approximate Jacobian of a W-method only affects the stability and efficiency of the scheme, it can be designed such that the resulting linear systems admit a fast solver, which is the path we follow in this work.

In HEVI schemes, the cost of the implicit part is dominated by the solution of the column systems, which have to be solved in every stage of every time step.
For DG discretizations, each column system is block tridiagonal, with one dense block per element and off-diagonal blocks that couple neighboring elements through the numerical fluxes, and it is commonly solved with banded or block-tridiagonal direct solvers, e.g., \cite{giraldo2013implicit}.
Two families of Schur-complement techniques can further reduce this cost.
The first one is static condensation, introduced by \citet{guyan1965reduction} in 1965 for continuous finite elements, which eliminates the element-interior unknowns locally and leaves a globally coupled system for the element-edge unknowns only; for DG methods, it is closely related to hybridized discontinuous Galerkin (HDG) methods \cite{cockburn2009unified,nguyen2009implicit,nguyen2009nonlinear,nguyen2011implicit}.
\citet{rueda2021statically} showed that the DGSEM on LGL nodes can be statically condensed efficiently, since its element-interior unknowns couple to the neighboring elements only through the element-edge nodes, and \citet{laskowski2020advantages} quantified the resulting advantages for implicit solvers of the compressible Navier--Stokes equations.
More recently, \citet{pan2025half} proposed half-closed DG discretizations, whose condensation resembles one of the strategies presented in this work, but without a local elimination of prognostic variables.
The second family is used in semi-implicit atmospheric models, where Schur complements eliminate selected prognostic variables from the linearized system.
\citet{giraldo2010semi} derived several such formulations and showed that the choice of prognostic variables determines the structure of the resulting systems, and \citet{reddy2023schur} combined this approach with DG discretizations, reducing the effective system size to approximately one fifth of the original problem.
To the authors' knowledge, both families of techniques have not yet been combined for the HEVI time integration of DG discretizations.

In this paper, we combine both Schur-complement techniques into \emph{nested} Schur-complement solvers for the Rosenbrock\-/W HEVI time integration of a split-form LGL-DGSEM for the compressible Euler equations with gravity.
We first derive an approximate vertical Jacobian in closed block form, which captures the vertically propagating acoustic waves and gravity terms on curvilinear meshes.
By projecting the momentum onto the unit normal of the coordinate surfaces, we obtain an acoustic subsystem that is free of geometric (Christoffel) terms on arbitrary curvilinear meshes, including meshes with orography.
The resulting solvers combine an analytical, element-local elimination of density and potential-temperature density with a global static condensation of each column onto element-edge unknowns.
We propose two condensation strategies, which we call \emph{two-sided} and \emph{one-sided}, whose reduced systems are banded and block tridiagonal, respectively, and we analyze their structure and computational cost.
Both strategies are implemented for CPUs and GPUs in TrixiAtmo.jl \cite{trixiatmo} using KernelAbstractions.jl \cite{Churavy_KernelAbstractions_jl}, which is vendor agnostic and supports both NVIDIA and AMD hardware.
We verify the high-order convergence of the method with linear inertia--gravity waves in a two-dimensional channel, and we demonstrate its robustness on the sphere with standard test cases: the baroclinic instability, a mountain-induced baroclinic instability, and the gap-flow and vortex-shedding cases of DCMIP-2025.
For the baroclinic instability on a sphere without orography, the scheme achieves $18.8$ and $2.9$ simulated years per day at $\SI{100}{km}$ and $\SI{50}{km}$ horizontal resolution, respectively, on a single NVIDIA GH200 GPU in double precision.

The paper is organized as follows.
Section~\ref{sec:equations} presents the governing equations.
In Section~\ref{sec:hevidgsem}, we describe the split-form LGL-DGSEM and the Rosenbrock-W methods, and we derive the vertical Jacobian.
Section~\ref{sec:nestedschur} introduces the nested Schur-complement solvers and the two condensation strategies, and Section~\ref{sec:implementation} describes their implementation on CPUs and GPUs.
The solvers are tested in Section~\ref{sec:results}, and we draw our conclusions in Section~\ref{sec:Conclusions}.

\section{Governing equations}\label{sec:equations}
We consider the compressible Euler equations with gravity and Coriolis forces, written with the potential-temperature density as the thermodynamic prognostic variable,
\begin{equation}\label{eq:EulerPotentialTemperature0}
	\begin{aligned}
		 & \partial_t \varrho + \Nabla \cdot \left (\varrho \vec{v} \right ) = 0,                                         \\
		 & \partial_t (\varrho \vec{v}) + \Nabla \cdot ( \varrho \vec{v} \otimes \vec{v}) + \Nabla p = -\varrho \Nabla \phi - 2 \vec{\Omega} \times \varrho \vec{v}, \\
		 & \partial_t (\varrho \theta) + \Nabla \cdot \left (\varrho \theta \vec{v} \right ) = 0,
	\end{aligned}
\end{equation}
where $\varrho$ is the density, $\vec{v} = (u,v,w)^T$ the velocity in Cartesian coordinates, $\theta$ the potential temperature, $\phi = \phi(\vec x)$ the geopotential, and $\vec{\Omega} = (0, 0, \Omega)^T$ the angular velocity of the Earth, with the rotation axis aligned with the $z$-axis.
The potential temperature is related to the temperature $T$ through $\theta = T\,(p_0/p)^{R/c_p}$, where $c_p$ and $c_v$ are the specific heat capacities at constant pressure and volume, $R = c_p - c_v$ is the gas constant, and $p_0$ is a constant reference pressure.
Combined with the ideal-gas law $p = \varrho R T$, this yields the equation of state
\begin{equation}\label{eq:eos}
	p = p_0 \left( \frac{R\, \varrho \theta}{p_0} \right)^{\gamma},
\end{equation}
with the heat capacity ratio $\gamma = c_p / c_v$.
Throughout this work, we take $\Omega  = \SI{7.292e-5}{rad/s}$, $p_0 = \SI{100000}{Pa}$, $R = \SI{287}{J \, kg^{-1} K^{-1}}$, $c_p = \SI{1004}{J \, kg^{-1} K^{-1}}$, $c_v = \SI{717}{J \, kg^{-1} K^{-1}}$.

The equation of state \eqref{eq:eos} shows that, in this formulation, the pressure depends on a single prognostic variable, the potential-temperature density $\varrho\theta$.
As we will see in Section~\ref{sec:jacobian_hevi}, this property leads to a particularly sparse structure of the vertical Jacobian.

We use the compact notation
\begin{equation}\label{eq:EulerPotentialTemperature}
    \partial_t \state{u} + \Nabla \cdot \blocktensor{f} = \state{s},
\end{equation}
with the state vector of prognostic quantities $\state{u} = (\varrho, \varrho \vec{v}, \varrho \theta)^T$, the flux $\blocktensor{f} = (\state{f}_1, \state{f}_2, \state{f}_3)^T$,
\begin{equation}
\blocktensor{f} =
    \begin{pmatrix}
        {\varrho}\, {\vec v} \\[2pt]
        {\varrho}\, {\vec v} \otimes {\vec v} + {p}\, \mat I \\[2pt]
        {\varrho}\, \theta\, {\vec v}
\end{pmatrix},
\end{equation}
and the source term
\begin{equation}
	\state{s} =
	\begin{pmatrix}
		0 \\[2pt]
		-\varrho \Nabla \phi - 2 \vec{\Omega} \times \varrho \vec{v} \\[2pt]
		0
	\end{pmatrix}.
\end{equation}
In the split-form discretization of Section~\ref{sec:dgsem}, the gravity term $-\varrho\Nabla\phi$ is treated as a non-conservative product rather than as a source term, which is key to obtaining a well-balanced scheme \cite{waruszewski2022entropy,artiano2025structure}. 

\section{HEVI-DGSEM}\label{sec:hevidgsem}

In this section, we detail the high-order numerical discretization.
First, we describe the structure-preserving split-form DGSEM discretization of the governing equations in Section~\ref{sec:dgsem}.
Second, we introduce Rosenbrock-W methods and detail the HEVI splitting of the right-hand-side operator in Section~\ref{sec:rosenbrock}.

\subsection{LGL-DGSEM}
\label{sec:dgsem}

We discretize \eqref{eq:EulerPotentialTemperature} with LGL-DGSEM.
The computational domain is partitioned into non-overlapping, possibly curvilinear, hexahedral elements $\elemDom$, each obtained as the image of the reference cube $[-1,1]^3$ under a mapping $\vec x(\xi^1,\xi^2,\xi^3)$. 
Within each element, the solution and the flux are approximated by tensor-product Lagrange interpolating polynomials of degree $\polydeg$ through the $M=\polydeg+1$ LGL nodes $\{\xi_i\}_{i=0}^{\polydeg}$ on $[-1,1]$, with associated quadrature weights $\{\omega_i\}_{i=0}^\polydeg$; we index nodes $i,j,k \in \{0,\dots,\polydeg\}$ in the three reference directions and write $\state u_{ijk}(t) \approx \state u(\vec x_{ijk},t)$ for the nodal solution.

\paragraph{Mapping and metric terms.}
The mapping Jacobian is denoted $J = \det\left(\partial \vec x/\partial(\xi^1,\xi^2,\xi^3)\right)$, and the contravariant basis vectors $\vec a^m = \Nabla \xi^m$, $m=1,2,3$, define the transformation between reference and physical space. Following \citet{kopriva2006metric}, the (scaled) contravariant metric terms $(J\vec a^m)_{ijk}$ are computed via the curl form of the metric identities rather than the direct cross-product form, which is required for the discrete metric identities to hold exactly in three dimensions and is what guarantees that the scheme below preserves a constant (free-stream) solution on curvilinear meshes. The physical flux $\blocktensor f = (\state f_1, \state f_2, \state f_3)^T$ of \eqref{eq:EulerPotentialTemperature} is then transformed into the three contravariant fluxes
\begin{equation}
	\contSt f^m_{ijk} := \blocktensor f(\state u_{ijk}) \cdot (J\vec a^m)_{ijk}, \qquad m=1,2,3,
\end{equation}
and the outward unit normal at each of the six element faces is recovered from the same metric terms, e.g., at the face $i=\polydeg$, $\vec n_{\polydeg jk} = (J\vec a^1)_{\polydeg jk}/\norm{J\vec a^1}_{\polydeg jk}$ (and similarly, with a sign change, at $i=0$, and for the $j$- and $k$-faces).

\paragraph{One-dimensional SBP operators.}
Collocating the LGL interpolation and quadrature nodes gives the (dense) polynomial differentiation matrix $D_{jk} = \ell_k'(\xi_j)$ and the diagonal LGL mass matrix $\mat M = \diag(\omega_0,\dots,\omega_{\polydeg })$, related through the summation-by-parts (SBP) property
\begin{equation}
	\mat Q + \mat Q^T = \mat B, \qquad \mat Q \coloneqq \mat M \mat D, \qquad \mat B \coloneqq \diag(-1,0,\dots,0,1).
\end{equation}
$\mat B$ is the boundary evaluation matrix. 
Two combinations of $\mat Q$ and $\mat B$ are used below: the (non-symmetric) operator $\hat{\mat D} \coloneqq \mat Q - \mat B$, used in the strong form, and the skew-symmetric operator $\mat S \coloneqq 2\mat Q - \mat B = \mat Q - \mat Q^T$, used in the flux-differencing (split) form. 
The exact equivalence of the SBP-collocation identity to an SBP-SAT (simultaneous-approximation-term) finite-difference scheme with diagonal norm matrix was established by \citet{gassner2013skew}; we refer to \citet{kopriva2009implementing} for the standard textbook treatment of LGL-DGSEM and to \citet{kopriva2010quadrature} for the equivalence of the strong and weak forms used interchangeably below.

\paragraph{Standard (strong) form DGSEM.}
Approximating \eqref{eq:EulerPotentialTemperature} directly with the operators above yields, for element $\elemDom$,
\begin{align} \label{eq:DGSEM_3D_std}
	J_{ijk}\, \omega_{ijk}\, \dot{\state u}_{ijk}
	& + \omega_{jk}\left(\sum_{m=0}^{\polydeg} \hat D_{im}\, \contSt f^1_{mjk} - \delta_{i0}\numfluxb f^1_{(0,L)jk} + \delta_{i\polydeg}\numfluxb f^1_{(\polydeg,R)jk}\right) \nonumber \\
	& + \omega_{ik}\left(\sum_{m=0}^{\polydeg} \hat D_{jm}\, \contSt f^2_{imk} - \delta_{j0}\numfluxb f^2_{i(0,L)k} + \delta_{j\polydeg}\numfluxb f^2_{i(\polydeg,R)k}\right) \nonumber \\
	& + \omega_{ij}\left(\sum_{m=0}^{\polydeg} \hat D_{km}\, \contSt f^3_{ijm} - \delta_{k0}\numfluxb f^3_{ij(0,L)} + \delta_{k\polydeg}\numfluxb f^3_{ij(\polydeg,R)}\right)
	= J_{ijk}\,\omega_{ijk}\, \state s_{ijk},
\end{align}
where $J_{ijk}$ is the mapping Jacobian, $\omega_{ij}\coloneqq\omega_i\omega_j$ and $\omega_{ijk}\coloneqq\omega_i\omega_j\omega_k$, $\delta_{ij}$ is the Kronecker delta, $\state s$ is the source term (e.g., arising from the geopotential), and $L$/$R$ denote the neighboring element sharing the corresponding face. The numerical flux couples the two elements meeting at a face through an approximate Riemann solver $\numfluxb f(\cdot,\cdot\,;\vec n)$,
\begin{equation}
	\numfluxb f^1_{(\polydeg,R)jk} \coloneqq \hat{\blocktensor f}(\state u_{ \polydeg jk}, \state u_{Rjk}) \cdot (J\vec a^1)_{\polydeg jk}
	= \numfluxb f(\state u_{\polydeg jk}, \state u_{Rjk}; \vec n_{\polydeg jk})\, \norm{J\vec a^1}_{\polydeg jk},
\end{equation}
and analogously on the remaining five faces; the last equality uses the fact that the metric terms are single-valued across element interfaces, so the numerical flux reduces to the usual 1D Riemann problem in the normal direction, scaled by the surface Jacobian $\norm{J\vec a^1}_{\polydeg jk}$.

\paragraph{Flux-differencing (split) form DGSEM.}
In general, the standard form \eqref{eq:DGSEM_3D_std} does not satisfy a discrete entropy inequality for the nonlinear flux $\blocktensor f$.
\citet{fisher2013high} and \citet{gassner2016split} showed that this can be remedied by replacing the single-argument contravariant flux $\contSt f^m$ with a symmetric, consistent two-point volume flux $\blocktensor f^*(\state u_L,\state u_R)$, evaluated between every pair of nodes along each reference direction, and the operator $\hat{\mat D}$ with the skew-symmetric operator $\mat S$.
If the two-point flux is entropy conservative, the resulting volume terms do not produce entropy, and entropy stability is controlled by the numerical fluxes at the element interfaces.

For atmospheric flows, we can further discretize the gravity term $-\varrho\Nabla\phi$ as a non-conservative product (i.e., the product of the density, a component of the state, with the gradient of the geopotential) instead of as a source term.
This allows us to discretize the gravity term with the same SBP operator as the pressure gradient, so that the discrete pressure gradient and gravity term balance exactly for hydrostatic states, which is the key to obtaining a well-balanced scheme \cite{waruszewski2022entropy,artiano2025structure,artiano2026entropy}.
Non-conservative terms are incorporated into the flux-differencing framework through two-point non-conservative terms $\blocktensorG{\Phi}^{\star}(\state u_L,\state u_R)$, which are evaluated between the same pairs of nodes as the volume fluxes; this extension of entropy-stable split-form DGSEM schemes to non-conservative systems was developed by \citet{renac2019entropy} and \citet{bohm2020entropy}, and was subsequently used, e.g., in \cite{manzanero2020entropy,rueda2021entropy,ruedaramirez2024fluxdifferencing,waruszewski2022entropy,artiano2025structure}.
This yields the flux-differencing form
\begin{align} \label{eq:DGSEM_3D_fluxdiff}
	J_{ijk}\, \omega_{ijk}\, \dot{\state u}_{ijk}
	& + \omega_{jk}\left(\sum_{m=0}^{\polydeg} S_{im}\, \left (\contSt f^{1*}_{(i,m)jk} + \contSt \Phi^{1\star}_{(i,m)jk}\right )  - \delta_{i0}\numfluxb f^1_{(0,L)jk} + \delta_{i\polydeg}\numfluxb f^1_{(\polydeg,R)jk}\right) \nonumber \\
	& + \omega_{ik}\left(\sum_{m=0}^{\polydeg} S_{jm}\, \left ( \contSt f^{2*}_{i(j,m)k} + \contSt \Phi^{2\star}_{i(j,m)k} \right )- \delta_{j0}\numfluxb f^2_{i(0,L)k} + \delta_{j\polydeg}\numfluxb f^2_{i(\polydeg,R)k}\right) \nonumber \\
	& + \omega_{ij}\left(\sum_{m=0}^{\polydeg} S_{km}\, \left ( \contSt f^{3*}_{ij(k,m)} + \contSt \Phi^{3\star}_{ij(k,m)} \right ) - \delta_{k0}\numfluxb f^3_{ij(0,L)} + \delta_{k\polydeg}\numfluxb f^3_{ij(\polydeg,R)}\right)
	= J_{ijk}\,\omega_{ijk}\, \state s_{ijk},
\end{align}
with volume two-point conservative and nonconservative fluxes
\begin{equation}
	\contSt f^{1*}_{(i,m)jk} \coloneqq \blocktensor f^*(\state u_{ijk},\state u_{mjk}) \cdot \avg{J\vec a^1}_{(i,m)jk}, \quad 
    \contSt \Phi^{1\star}_{(i,m)jk} \coloneqq \contSt \Phi^{\star}(\state u_{ijk},\state u_{mjk}) \cdot \avg{J\vec a^1}_{(i,m)jk},
\end{equation}
and analogously for the $2$- and $3$-directions, where $\avg{\cdot}$ denotes the arithmetic average between the two nodal states involved. Notice that only the surface terms of \eqref{eq:DGSEM_3D_fluxdiff} are unchanged from \eqref{eq:DGSEM_3D_std}. 

We use the entropy-conserving two-point fluxes of \citet{artiano2025structure} for the volume terms in \eqref{eq:DGSEM_3D_fluxdiff}, which extend the general framework above to the potential-temperature formulation \eqref{eq:EulerPotentialTemperature}. 
In the split-form formulation, the geopotential is treated as a non-conservative product rather than a source term, guaranteeing additional robustness and well-balancedness for a given choice of the numerical flux of the density~\cite{waruszewski2022entropy, souza2023, artiano2025structure}.
Besides the arithmetic average $\avg{a} = \frac{1}{2} ( a_L + a_R)$ and the jump $\jump{a} = a_R - a_L$ of two nodal states, the two-point flux uses the logarithmic mean~\cite{ismail2009affordable}
\begin{equation}
    \avg{a}^{\mathrm{ln}} \coloneqq \frac{\jump{a}}{\jump{\ln a}}.
\end{equation}
The entropy-conserving two-point flux reads~\cite{artiano2025structure}
\begin{equation} \label{eq:flux_ec}
    \blocktensor f^{*}_{\ec}(\state u_L, \state u_R) =
    \begin{pmatrix}
        \avg{\varrho}^{\ln}\, \avg{\vec v} \\[2pt]
        \avg{\varrho}^{\ln}\, \avg{\vec v} \otimes \avg{\vec v} + \avg{p}\, \mat I \\[2pt]
        \avg{\varrho}^{\ln}\, \avg{\vec v} \,\big/\, \avg{1/\theta}^{\ln}
    \end{pmatrix},
\end{equation}
and the non-conservative two-point term is
\begin{equation} \label{eq:volume_gravity}
    \blocktensorG{\Phi}^{\star}(\state u_L, \state u_R)
    = \tfrac12\, \avg{\varrho}^{\ln}\, \jump{\phi}\,
      \big(0,\; \vec e_1,\; \vec e_2,\; \vec e_3,\; 0\big)^T.
\end{equation}

Note that the two-point non-conservative term in \eqref{eq:volume_gravity} is proportional to the jump of geopotential and, hence, does not contribute at the surface, since it is continuous across the interfaces and $\jump{\phi}$ therefore vanishes identically.

\paragraph{Surface flux.}
The numerical surface flux $\numfluxb f$ coupling two elements is computed with the entropy-stable and well-balanced two-point numerical flux of \citet{artiano2026entropy}, which is a modification of the Low Mach number Approximate Riemann Solver (LMARS) of \citet{chen2013control}.
With $\vec{n}$ the outward unit normal of Section~\ref{sec:dgsem} and the sound speed $c \coloneqq \sqrt{\gamma p/\varrho}$, the interface velocity and pressure of the entropy-stable flux of \citet{artiano2026entropy} are
\begin{equation}
    V^{\mathrm{num}} \coloneqq \avg{\vec{v} \cdot \vec{n}} - \frac{1}{2 \avg{c}} \left (\frac{\jump{p}}{\avg{\varrho}^{\ln}} + \jump{\phi} \right )  
\end{equation}
\begin{equation}
    P^{\mathrm{num}} \coloneqq \avg{p} - \frac{\avg{c} \avg{\varrho}}{2} \jump{\vec{v} \cdot \vec{n}}
\end{equation}
and the numerical flux reads
\begin{equation}
    \numfluxb f(\state u_L, \state u_R; \vec n) =
    \begin{pmatrix}
        \varrho^{\mathrm{up}}\, V^{\mathrm{num}} \\[4pt]
        (\varrho \vec v)^{\mathrm{up}}\, V^{\mathrm{num}} + P^{\mathrm{num}}\,\vec n \\[4pt]
        \varrho^{\mathrm{up}}\, V^{\mathrm{num}}
        \left(\dfrac{1}{\avg{1/\theta}^{\ln}}
              - \dfrac{\min(\theta_L,\theta_R)}{2\,\avg{\theta}}\, \jump{\theta}\, \mathrm{sign} \left ( V^{\mathrm{num}} \right ) \right)
    \end{pmatrix},
    \qquad
    a^{\mathrm{up}} \coloneqq \avg{a} - \tfrac12 \jump{a}\, \mathrm{sign}\left ( V^{\mathrm{num}} \right ).
\end{equation}

\subsection{The Rosenbrock-W method and the choice of splitting}
\label{sec:rosenbrock}

As discussed in Section~\ref{sec:intro}, we integrate the semi-discrete HEVI-DGSEM system in time with Rosenbrock-W methods \cite{steihaug1979attempt,norsett1979order,jahn2015asam}.
From now on, we will use uppercase arrays to denote the global vectors.
For instance, the vector $\state{U}$ denotes the global solution vector that assembles the individual nodal solutions of all nodes and elements in the domain.
Writing the system as $\dot{\state{U}} = \state{R}(\state{U}) = \state{R}_H(\state U) + \state{R}_V(\state U)$, with $\state R_H$ the horizontally explicit part and $\state R_V$ the vertically implicit part, each stage requires the solution of a single linear system involving an approximate Jacobian $\mat J$ of $\state{R}_V$, 
\begin{equation}\label{eq:rosenbrockW}
	\left( \frac{1}{\hat \gamma \Delta t} \mat{I} - \mat{J} \right) \state{K}_i = \state{R}\Big(\state{U}^n + \textstyle\sum_{j<i} \tilde{a}_{ij} \state{K}_j\Big) + \frac{1}{\Delta t}\textstyle\sum_{j<i} c_{ij} \state K_j,
	\qquad i = 1, \dots, s,
\end{equation}
and the stages are combined into the updated solution as
\begin{equation}\label{eq:Rosenbrock_update}
    \state U^{n+1} = \state U^n + \sum_{i=1}^{s} m_i \state K_i.
\end{equation}
Here, $s$ is the number of stages, and $\hat\gamma$, $\tilde{a
}_{ij}$, $c_{ij}$, and $m_i$ are the method coefficients.
They follow from the coefficients of the standard form of Rosenbrock-W methods \cite{hairer1996solving}, namely the strictly lower triangular matrix $\mat\alpha = (\alpha_{ij})$, the lower triangular matrix $\mat\Gamma = (\gamma_{ij})$ with constant diagonal $\gamma_{ii} = \hat\gamma$, and the weights $\state b = (b_i)$, as \cite[Sec.~IV.7]{hairer1996solving}
\begin{equation}\label{eq:rosenbrock_transformation}
	(\tilde{a}_{ij}) = \mat{ a}\,\mat\Gamma^{-1}, \qquad
	(c_{ij}) = \frac{1}{\hat\gamma}\mat I - \mat\Gamma^{-1}, \qquad
	(m_i)^T = \state b^T \mat\Gamma^{-1}.
\end{equation}
The standard form contains Jacobian--vector products, which are eliminated by the change of variables $\state K_i = \sum_{j\le i}\gamma_{ij}\tilde{\state K}_j$, where $\tilde{\state K}_i$ are the stage increments of the standard form.
As a consequence, $\mat J$ only enters \eqref{eq:rosenbrockW} through the system matrix: each stage requires one linear solve with $\tfrac{1}{\hat\gamma\Delta t}\mat I - \mat J$, but no Jacobian--vector products.
We use \eqref{eq:rosenbrockW}--\eqref{eq:Rosenbrock_update} in the implementation, and $\mat a$, $\mat\Gamma$, and $\state b$ to derive the method, since the order conditions and the stability function take their simplest form in terms of these coefficients.
Since the HEVI-DGSEM system considered here is autonomous, we have omitted the additional term that accounts for an explicit time dependence of the right-hand side \cite{hairer1996solving}.
All tableaus used in this work have the same constant diagonal coefficient $\hat \gamma$ for every stage, so that the factorization of $\tfrac{1}{\hat \gamma\Delta t}\mat I - \mat J$ can be reused across all stages of a time step instead of being recomputed stage by stage.

The defining property of W-methods is that their order conditions do not restrict the matrix $\mat J$: they are satisfied for an arbitrary approximation of the Jacobian \cite{steihaug1979attempt,norsett1979order}.
The choice of $\mat J$ therefore determines the stability and efficiency of the scheme and, for stiff problems, its observed accuracy, but not the order conditions.
This is what makes the nested Schur-complement solvers of Section~\ref{sec:nestedschur} possible: we can replace the exact, dense Jacobian of the split-form DGSEM operator by a cheaper, sparser approximation, provided that it captures the stiff, vertically propagating acoustic and gravity-wave modes that motivate the HEVI splitting.

Table~\ref{tab:ssp_tableau} contains the tableau of the Rosenbrock-W method used throughout this paper.
This method is derived in Appendix~\ref{app:tableau} and fulfills three desired properties: (i) it has a constant diagonal $\hat \gamma$ for all stages, (ii) it reduces to the three-stage, third-order SSP Runge--Kutta method of \citet{shu1988efficient} in purely explicit settings, and (iii) it is L-stable.

\begin{table}[h]
    \centering
	\caption{Coefficients of the three-stage, second-order, L-stable Rosenbrock-W method with $\hat\gamma = 1 - \tfrac{1}{\sqrt2}$ used in this work.
	Left: row sums $a_i$ (first column), $a_{ij}$ and $b_i$ (middle block), and $\gamma_{ij}$ (right block); for $\mat J = \mat 0$, the method reduces to the third-order SSP Runge--Kutta method of \citet{shu1988efficient}.
	Right: coefficients of \eqref{eq:rosenbrockW}--\eqref{eq:Rosenbrock_update}, obtained with \eqref{eq:rosenbrock_transformation}.}
    \label{tab:ssp_tableau}
    \[
    \renewcommand{\arraystretch}{1.3}
    \begin{array}{c|ccc|ccc}
        0             &            &            &            & 1-\tfrac1{\sqrt2}    &                   & \\
        1             & 1          &            &            & \tfrac{3}{\sqrt2}-3  & 1-\tfrac1{\sqrt2} & \\
        \tfrac12      & \tfrac14   & \tfrac14   &            & \tfrac{3\sqrt2-4}{8} & -\tfrac14         & 1-\tfrac1{\sqrt2} \\
        \hline
                      & \tfrac16   & \tfrac16   & \tfrac23   &                      &                   &
    \end{array}
    \qquad
    \begin{array}{c cc cc c}
        i & \tilde{a}_{i1} & \tilde{a}_{i2} & c_{i1} & c_{i2} & m_i \\
        \hline
        1 & & & & & \tfrac{26+15\sqrt2}{6} \\
        2 & 2+\sqrt2 & & -3(2+\sqrt2) & & \tfrac{8+5\sqrt2}{6} \\
        3 & 2+\sqrt2 & \tfrac{2+\sqrt2}{4} & -\tfrac{18+11\sqrt2}{4} & -\tfrac{3+2\sqrt2}{2} & \tfrac{2(2+\sqrt2)}{3}
    \end{array}
    \]
\end{table}

\paragraph{Choice of the approximate Jacobian.}
The matrix $\mat J$ in \eqref{eq:rosenbrockW} is our approximation to the Jacobian of the \emph{full} right-hand side $\state R = \state R_H + \state R_V$.
Exploiting the freedom of W-methods, we make an aggressive choice: we set the horizontal contribution to zero and retain only the Jacobian of the vertical operator, $\mat J \approx \partial \state R_V/\partial \state U$, evaluated with the standard (non-split) form of the DGSEM discretization, rather than the entropy-conservative split form that is used to evaluate $\state R$ in \eqref{eq:rosenbrockW}.
This choice has two consequences that are central to the rest of the paper.
First, $\mat J$ is block diagonal, with one block per vertical column, so that the linear systems of all columns are independent, which also simplifies the MPI parallelization.
Second, within each column, the Jacobian has a sparsity pattern that can be exploited by the nested Schur-complement solvers of Section~\ref{sec:nestedschur}; its derivation is the subject of Section~\ref{sec:jacobian_hevi}.
Including the split-form gravity term in $\mat J$, as opposed to the standard source term used here, would introduce additional fill-in: the density block coupling to the vertical momentum would no longer be diagonal.
We therefore intentionally use an inexact, non-split-form Jacobian throughout.
Although this means that there are some inconsistencies between the split-form DGSEM discretization of the equations and the standard DGSEM treatment of the vertical implicit systems, we remark that the discretization of the implicit systems can be interpreted as a preconditioning technique to deal with the stiffness of the problem.
Furthermore, we will show that the choice of approximate Jacobian and time-integration method preserves linear invariants for our system, which results in total mass and potential temperature conservation up to round-off errors.

\subsection{Definition of the HEVI Jacobian}
\label{sec:jacobian_hevi}
The HEVI splitting treats only the third reference direction implicitly.
To obtain the vertical subsystem, we project the momentum equation of \eqref{eq:EulerPotentialTemperature} onto the unit vector normal to the coordinate surfaces $\xi^3 = \mathrm{const}$,
\begin{equation}
	\vec\nu \coloneqq \frac{J\vec a^3}{n}, \qquad n \coloneqq \norm{J\vec a^3},
\end{equation}
where $J\vec a^3$ are the discrete metric terms of Section~\ref{sec:dgsem}, and denote the resulting vertical momentum by $\varrho\tilde w \coloneqq \vec\nu\cdot\varrho\vec v$.
Note that, due to the polynomial representation of the metric terms and in the presence of orography, $\vec\nu$ is in general not aligned with the radial direction, and that it may vary along a column.

In our HEVI scheme, ``vertical'' means the direction of the third contravariant vector (i.e., the direction of $J\vec{a}^3$ and $\vec\nu$), not the radial direction. That is the correct HEVI split for terrain-following meshes as $\vec\nu$ points in the direction where the stiffness actually is: normal to the $\xi^3 = \text{const}$ surfaces, which is the direction of the small grid spacing (see Figure~\ref{fig:shell_piece}). 

\begin{figure}
  \centering
  \begin{subfigure}[t]{0.48\linewidth}
    \centering
\includegraphics[width=\linewidth]{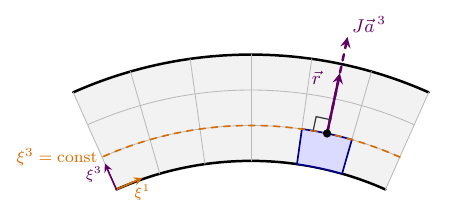}
    \caption{Sphere without orography}
    \label{fig:shell_piece_a}
  \end{subfigure}\hfill
  \begin{subfigure}[t]{0.48\linewidth}
    \centering
\includegraphics[width=\linewidth]{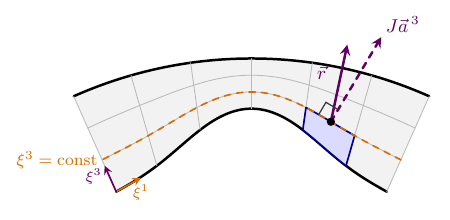}
    \caption{Non-constant orography}
    \label{fig:shell_piece_b}
  \end{subfigure}
  \caption{Illustration of the contravariant metric term orientation and radial direction for a sphere without orography (a) and a sphere with non-constant orography (b).}
  \label{fig:shell_piece}
\end{figure}

Denoting by $\mat T = \varrho\vec v\otimes\vec v + p\,\mat I$ the momentum flux tensor, the projection of its vertical flux divergence reads
\begin{equation}\label{eq:projection_identity}
	\vec\nu\cdot\partial_{\xi^3}\big(\mat T\,J\vec a^3\big)
	= \partial_{\xi^3}\big(\vec\nu\cdot\mat T\,J\vec a^3\big) - \big(\partial_{\xi^3}\vec\nu\big)\cdot\mat T\,J\vec a^3,
\end{equation}
where the last term corresponds to the Christoffel-symbol terms of the Euler equations in covariant form; see, e.g., \cite{baldauf2021horizontally,baldauf2020discontinuous,baldauf2026comparison} for covariant formulations in DG methods for the compressible Euler equations, and \cite{Montoya2026} for an entropy-stable DGSEM in covariant form.

Since $J\vec a^3\cdot\varrho\vec v = n\,\varrho\tilde w$, the vertical fluxes of mass and potential temperature are also expressed exactly in terms of $\varrho\tilde w$.
We discretize the vertical subsystem using the standard (strong) form DGSEM detailed in Section \ref{sec:dgsem}, retaining only the third direction of~\eqref{eq:DGSEM_3D_std}:
\begin{equation}
    J_{k} \omega_k \dot{\state{u}}^V_{k} + \sum_{m =0}^{\polydeg} \hat{D}_{km} \contSt f^{3V}_{m} - \delta_{k0}\numfluxb f^{3V}_{(0,L)} + \delta_{k\polydeg} \numfluxb f^{3V}_{(\polydeg, R)} = J_{k} \omega_k \state s^V_{k}, \quad k = 0, \dots, \polydeg,
\end{equation}
where the horizontal indices $ij$ have been dropped for readability, and $L$ and $R$ now denote the elements below and above.
The vector of prognostic quantities of the vertical subsystem is $\state{u}^V = (\varrho, \varrho \tilde w, \varrho \theta)^T$, the contravariant flux is $\contSt f^{3V} = n\,(\varrho \tilde w,\; \varrho \tilde w^2 + p,\; \varrho \theta \tilde w)$, and the source term $\state s^V_k = (0, -\tilde g_k\,\varrho_k + s_{\mathrm{geo}}, 0)^T$ contains the gravity term $\tilde g_k \coloneqq \vec\nu_k\cdot\Nabla\phi(\vec x_k)$ and geometric source terms $s_{\mathrm{geo}}$ that arise from the second term on the right-hand side of \eqref{eq:projection_identity}.
\begin{remark}[Unit-vector formulation vs.\ covariant formulation]
    The choice of the unit vector $\vec\nu$, rather than the contravariant vector $J\vec a^3$ itself, avoids geometric source terms in the acoustic subsystem.
Since $\vec\nu$ has unit length, $\big(\partial_{\xi^3}\vec\nu\big)\cdot\vec\nu = 0$, and the pressure contribution to the last term vanishes identically, i.e., $\vec\nu\cdot\partial_{\xi^3}\big(p\,J\vec a^3\big) = \partial_{\xi^3}(n\,p)$, for any mesh geometry.
The remaining geometric term, $-\big(\partial_{\xi^3}\vec\nu\big)\cdot\varrho\vec v\,(J\vec a^3\cdot\vec v)$, is advective and is neglected below together with the bulk advection.
Contributions from the horizontal derivatives, including the curvature terms on the sphere and the cross terms of non-orthogonal meshes, belong to the horizontal operator $\state R_H$.
In contrast, projecting onto $J\vec a^3$ instead of $\vec\nu$ would introduce the additional pressure term $-\tfrac12\,p\,\partial_{\xi^3}\|{J\vec a^3}\|^2$ in the vertical subsystem.
\end{remark}

The fast (stiff) modes that we are trying to capture with our Rosenbrock-W methods are the vertically propagating sound waves.
For the HEVI Jacobian, it is therefore sufficient to linearize only the acoustic part of the vertical flux.
To this end, we freeze the potential temperature $\theta_f = (\varrho\theta)/\varrho$ at its nodal values of the state $\state u^n$ at which the Jacobian is assembled, once per time step, so that it is not differentiated, and we neglect the bulk advection term $\varrho\tilde w^2$ in the momentum flux.
This yields the restricted flux function $\contSt f^{3V,c} = n\,(\varrho \tilde w,\; p,\; \theta_f \varrho \tilde w)$, in which the pressure $p = p(\varrho\theta)$ remains the only nonlinearity.
Note that this reduces to the acoustic-gravity subsystem with the approximate flux Jacobian
\begin{equation}\label{eq:fluxJac_app}
    \bigpartialderiv{\contSt f^{3V}}{\state{u}^V} \approx
    \bigpartialderiv{\contSt f^{3V,c}}{\state{u}^{V}} =
    n
    \begin{pmatrix}
        0 & 1 & 0\\
        0 & 0 & \frac{\partial p}{\partial (\varrho \theta)}\\
        0 & \theta_f & 0
    \end{pmatrix}
    =
    n
    \begin{pmatrix}
        0 & 1 & 0\\
        0 & 0 & \frac{\gamma p}{\varrho \theta}\\
        0 & \theta_f & 0
    \end{pmatrix},
\end{equation}
with the eigenvalues
\begin{equation}
    \lambda_1=0, \qquad
    \lambda_{2,3} = \pm n\sqrt{\frac{\gamma p}{\varrho \theta} \theta_f} = \pm n\sqrt{\gamma \frac{p}{\varrho}} = \pm n\, c,
\end{equation}
which shows that the approximate flux Jacobian \eqref{eq:fluxJac_app} captures the upward- and downward-moving acoustic waves; the metric factor $n$ converts the physical sound speed $c$ into the corresponding wave speed in reference space.
Note that $\theta_f$ matches $\theta$ point-wise when the Jacobian is constructed.

To define the vertical Jacobians, we use the following ordering for the vertical solution vector that gathers all degrees of freedom of a vertical subsystem,
\begin{equation}\label{eq:orderingJac}
    \state{U}^V = (\dots, \varrho_0^{(e)}, \dots, \varrho_{\polydeg}^{(e)}, \varrho \theta_0^{(e)}, \dots, \varrho \theta_{\polydeg}^{(e)},\varrho \tilde w_0^{(e)}, \dots, \varrho \tilde w_{\polydeg}^{(e)}, \dots),
\end{equation}
where the super-index $(e)$ denotes the element index.
The semi-discrete vertical subsystem of element $e$ reads, node by node ($k = 0,\dots,\polydeg$),
\begin{equation}\label{eq:vertical_residual}
\begin{aligned}
    \dot\varrho^{(e)}_k &= -\frac{1}{J^{(e)}_k\omega_k}\left[\sum_{m=0}^{\polydeg} \hat D_{km}\, n^{(e)}_m (\varrho\tilde w)^{(e)}_m
        + \delta_{k\polydeg}\, \numflux{f}^{\varrho}_{e+\frac12} - \delta_{k0}\, \numflux{f}^{\varrho}_{e-\frac12}\right] \eqqcolon R^{\varrho,(e)}_k,\\
    \dot{(\varrho\theta)}^{(e)}_k &= -\frac{1}{J^{(e)}_k\omega_k}\left[\sum_{m=0}^{\polydeg} \hat D_{km}\, n^{(e)}_m \theta_{f,m}^{(e)} (\varrho\tilde w)^{(e)}_m
        + \delta_{k\polydeg}\, \numflux{f}^{\varrho \theta}_{e+\frac12} - \delta_{k0}\, \numflux{f}^{\varrho \theta}_{e-\frac12}\right] \eqqcolon R^{\varrho\theta,(e)}_k,\\
    \dot{\left(\varrho\tilde{w}\right)}^{(e)}_k &= -\frac{1}{J^{(e)}_k\omega_k}\left[\sum_{m=0}^{\polydeg} \hat D_{km}\, n^{(e)}_m p^{(e)}_m
        + \delta_{k\polydeg}\, \numflux{f}^{\varrho \tilde w}_{e+\frac12} - \delta_{k0}\, \numflux{f}^{\varrho \tilde w}_{e-\frac12}\right]
        - \tilde g^{(e)}_k\, \varrho^{(e)}_k \eqqcolon R^{\varrho\tilde w,(e)}_k,
\end{aligned}
\end{equation}
where $e\pm\frac12$ denote the upper and lower interfaces of element $e$.
At the element interfaces, i.e., the coordinate surfaces $\xi^3 = -1$ and $\xi^3 = 1$, the metric terms are single-valued and $\vec\nu$ coincides with the unit normal vector, so $\varrho\tilde w$ is the normal momentum and $n$ is the surface Jacobian.
Consistently with the restricted flux $\contSt f^{3V,c}$, the numerical fluxes retain only the acoustic part of the LMARS-type flux of Section~\ref{sec:dgsem}~\cite{chen2013control}, multiplied by the surface Jacobian.
At the interface $e+1/2$, with left state at node $\polydeg$ of element $e$ and right state at node $0$ of element $e+1$, they read
\begin{equation}\label{eq:vertical_numflux}
\begin{aligned}
    \numflux{f}^{\varrho}_{e+\frac12} &= n_{e+\frac12}\left(\avg{\varrho\tilde w} - \frac{1}{2c_0}\jump{p}\right),\\
    \numflux{f}^{\varrho\theta}_{e+\frac12} &= n_{e+\frac12}\left(\avg{\theta_f\,\varrho\tilde w} - \frac{\avg{\theta_f}}{2c_0}\jump{p}\right),\\
    \numflux{f}^{\varrho\tilde w}_{e+\frac12} &= n_{e+\frac12}\left(\avg{p} - \frac{c_0}{2}\jump{\varrho\tilde w}\right),
\end{aligned}
\end{equation}
where $n_{e+\frac12}$ is the (single-valued) value of $n$ at the interface and $c_0$ is a constant reference sound speed fixed at the beginning of the simulation (as is common to do in atmospheric applications), e.g., $c_0 = 340\,\mathrm{m/s}$.
This differs from the entropy-stable and well-balanced LMARS flux of Section~\ref{sec:dgsem}, which uses the local sound speed in the explicit right-hand side; since W-methods only require a consistent approximation of the Jacobian, this simplification affects only the stability and efficiency of the scheme, not its formal accuracy.
Note that the constant reference sound speed can be replaced with a frozen local sound speed at time $n$ in such a way that it does not change the sparsity.
To maintain conservation properties, the dissipative term of the $\varrho\theta$ flux is evaluated using the average of the nodally frozen potential temperature $\theta_f$.

The central part of \eqref{eq:vertical_numflux} has a useful interpretation.
Since $\sum_{m} S_{km}\,\avg{\contSt f}_{(k,m)} = \sum_m \hat D_{km}\,\contSt f_m$ for the arithmetic-mean two-point flux, \eqref{eq:vertical_residual} coincides with the flux-differencing form \eqref{eq:DGSEM_3D_fluxdiff} with central volume fluxes and LMARS-type surface dissipation.
Hence, the Jacobian derived below may equivalently be regarded as the Jacobian of a split-form DGSEM with central two-point fluxes, as long as no metric dealiasing is performed.

\paragraph{Linearization.}
All quantities that are frozen in $\contSt f^{3V,c}$, namely $\theta_f$ and $\avg{\theta}$, are evaluated at the state where $\mat J$ is assembled and are not differentiated.
The only remaining nonlinearity is the equation of state, which we linearize through
\begin{equation}
    \alpha_k \coloneqq \frac{\partial p}{\partial(\varrho\theta)}\bigg|_k = \frac{\gamma\, p_k}{(\varrho\theta)_k}.
\end{equation}
To simplify the notation, we drop the subscript $f$ from $\theta$ in the remainder of this section and introduce the element-local nodal vectors $\stateG \alpha^{(e)}$, $\stateG\theta^{(e)}$, and $\state n^{(e)}$, e.g., $\stateG \alpha^{(e)} = (\alpha^{(e)}_0,\dots,\alpha^{(e)}_{\polydeg})^T$, as well as the interface averages
$$
\avg{\theta}_{e+\frac12} \coloneqq \frac12\big(\theta^{(e)}_{\polydeg} + \theta^{(e+1)}_0\big).
$$

We also introduce the diagonal matrices
\begin{equation}
\begin{gathered}
    \mat \Lambda(\stateG a) \coloneqq \diag(a_0,\dots,a_{\polydeg}), \qquad
    \mat{\mathcal M}^{(e)} \coloneqq \diag\big(J^{(e)}_0\omega_0,\dots,J^{(e)}_{\polydeg}\omega_{\polydeg}\big),\\
    \mat B_{n}^{(e)} \coloneqq \diag\big(n_{e-\frac12},0,\dots,0,n_{e+\frac12}\big), \qquad
    \mat B_{n\theta}^{(e)} \coloneqq \diag\big(n_{e-\frac12}\avg{\theta}_{e-\frac12},0,\dots,0,n_{e+\frac12}\avg{\theta}_{e+\frac12}\big),
\end{gathered}
\end{equation}
where $\mat\Lambda(\stateG a)$ is defined for any state vector $\stateG a$.
The matrices $\state n^{(e)}$ and $\mat B_n^{(e)}$ depend only on the metric terms and can be precomputed once; the state enters the Jacobian only through $\stateG\alpha^{(e)}$, $\stateG\theta^{(e)}$, and $\avg{\theta}_{e\pm\frac12}$.
Differentiating \eqref{eq:vertical_residual} and collecting the volume and surface contributions of each element, we repeatedly encounter the combination
\begin{equation}
    \hat{\mat D} + \tfrac12\mat B = \mat Q - \tfrac12 \mat B = \tfrac12\left(\mat Q - \mat Q^T\right) = \tfrac12 \mat S,
\end{equation}
i.e., the central part of the linearized operator is exactly the skew-symmetric SBP operator of the split form.

\paragraph{Block structure.}
With the ordering of $\state U^V$, the Jacobian of a vertical column $\mat J^V = \partial \state R^V/\partial \state U^V$ is block tridiagonal, with $3\times3$ blocks of size $(\polydeg+1)\times(\polydeg+1)$ coupling each element to itself and to its lower and upper neighbors,
\begin{equation}
    \mat J^V =
    \begin{pmatrix}
        \ddots & \ddots & \ddots & & \\
        & \mat J^{(e,e-1)} & \mat J^{(e,e)} & \mat J^{(e,e+1)} & \\
        & & \ddots & \ddots & \ddots
    \end{pmatrix}.
\end{equation}
The element-diagonal block, with rows and columns ordered as $(\varrho, \varrho\theta, \varrho\tilde w)$, is
\begin{equation}\label{eq:Jac_ee}
    \mat J^{(e,e)} = -\frac12 \big(\mat{\mathcal M}^{(e)}\big)^{-1}
    \begin{bmatrix}
        \mat 0 & \frac{1}{c_0}\mat B_{n}^{(e)}\,\mat\Lambda(\stateG\alpha^{(e)}) & \mat S\,\mat\Lambda(\state n^{(e)}) \\[4pt]
        \mat 0 & \frac{1}{c_0}\mat B_{n\theta}^{(e)}\,\mat\Lambda(\stateG\alpha^{(e)}) & \mat S\,\mat\Lambda(\state n^{(e)} \circ \stateG\theta^{(e)}) \\[4pt]
        2\,\mat{\mathcal M}^{(e)}\,\mat\Lambda(\tilde{\state g}^{(e)}) & \mat S\,\mat\Lambda(\state n^{(e)} \circ \stateG\alpha^{(e)}) & c_0\,\mat B_{n}^{(e)}
    \end{bmatrix},
\end{equation}
where we use Hadamard products, e.g., $\state n^{(e)} \circ \stateG\alpha^{(e)}$, for the entry-wise multiplication of nodal vectors.
The coupling blocks only connect the two nodes adjacent to an interface:
\begin{equation}\label{eq:Jac_em1}
    \mat J^{(e,e-1)} = \frac{n_{e-\frac12}}{2J^{(e)}_0\omega_0}
    \begin{bmatrix}
        0 & \dfrac{\alpha^{(e-1)}_{\polydeg}}{c_0} & 1 \\[8pt]
        0 & \dfrac{\avg{\theta}_{e-\frac12}\,\alpha^{(e-1)}_{\polydeg}}{c_0} & \theta^{(e-1)}_{\polydeg} \\[8pt]
        0 & \alpha^{(e-1)}_{\polydeg} & c_0
    \end{bmatrix} \otimes \state e_0\state e_{\polydeg}^T,
\end{equation}
\begin{equation}\label{eq:Jac_ep1}
    \mat J^{(e,e+1)} = -\frac{n_{e+\frac12}}{2J^{(e)}_{\polydeg}\omega_{\polydeg}}
    \begin{bmatrix}
        0 & -\dfrac{\alpha^{(e+1)}_0}{c_0} & 1 \\[8pt]
        0 & -\dfrac{\avg{\theta}_{e+\frac12}\,\alpha^{(e+1)}_0}{c_0} & \theta^{(e+1)}_0 \\[8pt]
        0 & \alpha^{(e+1)}_0 & -c_0
    \end{bmatrix} \otimes \state e_{\polydeg}\state e_0^T.
\end{equation}
Here, $\state e_0, \state e_{\polydeg} \in \mathbb R^{\polydeg+1}$ are the first and last canonical unit vectors.
In the first and last elements of a column, the missing neighbor block is dropped.
The linearization of the boundary flux, which depends only on the interior trace, then contributes to the corresponding corner entries of $\mat J^{(e,e)}$.

Three features of \eqref{eq:Jac_ee}--\eqref{eq:Jac_ep1} motivate the chosen ordering and are exploited in Section~\ref{sec:nestedschur}.
First, the density enters the linearized system only through the gravity term in the $\varrho\tilde w$ equation, so the first block column vanishes except for the diagonal block $-\mat\Lambda(\tilde{\state g}^{(e)})$.
Second, the $\varrho$ and $\varrho\theta$ equations couple to $\varrho$ and $\varrho\theta$ only through diagonal entries and through the pair of nodes adjacent to each interface, which come from the surface dissipation.
Hence the leading $2\times 2$ block of $\tfrac{1}{\hat\gamma\Delta t}\mat I - \mat J^V$ is trivial to invert.
Third, the dense coupling through the LGL operator $\mat S$ appears only in the $\varrho\tilde w$ column and in the $(\varrho\tilde w,\varrho\theta)$ block.
Eliminating $\varrho$ and $\varrho\theta$ therefore leads to a Schur complement acting on the vertical momentum alone, as we show next.

Figure \ref{fig:Jperm_global} shows the sparsity pattern of the Jacobian matrix derived above after including the gravity and time-integration terms.
Figure \ref{fig:Jperm_only_vars} uses the ordering \eqref{eq:orderingJac}, while Figure~\ref{fig:Jperm} uses the ``natural'' ordering, in which the variables are interleaved node by node.
Note that the sparsity of the flux Jacobian \eqref{eq:fluxJac_app} is evident in the latter ordering.
\begin{figure}
    \centering
    \begin{subfigure}[b]{0.48\textwidth}
        \centering
        \includegraphics[width=\textwidth]{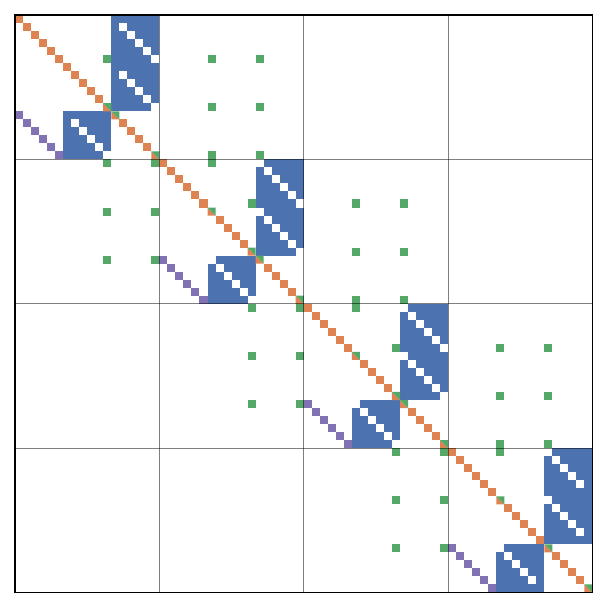}
        \caption{Ordering $\state U^V$ (grouped by variable within each element).}
        \label{fig:Jperm_only_vars}
    \end{subfigure}
    \hfill
    \begin{subfigure}[b]{0.48\textwidth}
        \centering
        \includegraphics[width=\textwidth]{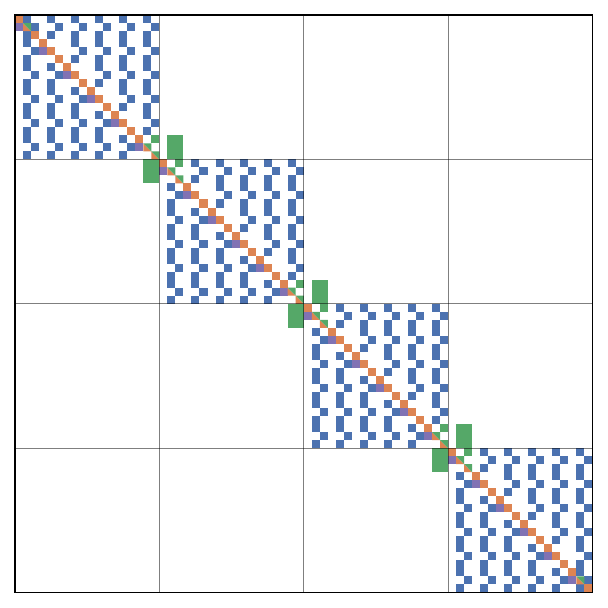}
        \caption{Natural ordering ($\varrho$, $\varrho\tilde w$, $\varrho\theta$ interleaved node by node).}
        \label{fig:Jperm}
    \end{subfigure}

    \vspace{0.5em}
    \includegraphics[width=0.9\textwidth]{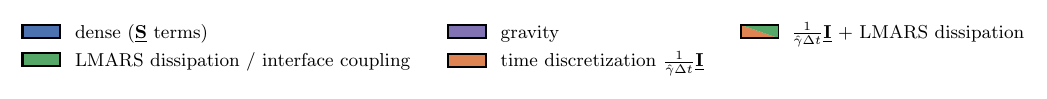}

    \caption{Sparsity pattern of $\tfrac{1}{\hat\gamma\Delta t}\mat I - \mat J^V$ for one vertical column with $4$ elements and $\polydeg = 5$.}
    \label{fig:Jperm_global}
\end{figure}

\paragraph{Projection onto the vertical subsystem.}
Since the vertical subsystem is formulated in the variables $\state u^V = (\varrho, \varrho\theta, \varrho\tilde w)^T$, whereas the global discretization evolves $\state u = (\varrho, \varrho\vec v, \varrho\theta)^T$, the stage equations \eqref{eq:rosenbrockW} cannot be solved with $\mat J^V$ directly.
To connect both sets of variables, we define at every node the restriction and lift operators
\begin{equation}\label{eq:restriction_lift}
    \mat P_k \coloneqq
    \begin{pmatrix}
        1 & \vec 0^T & 0 \\
        0 & \vec 0^T & 1 \\
        0 & \vec\nu_k^T & 0
    \end{pmatrix} \in \mathbb R^{3\times 5},
    \qquad
    \mat L_k \coloneqq
    \begin{pmatrix}
        1 & 0 & 0 \\
        \vec 0 & \vec 0 & \vec\nu_k \\
        0 & 1 & 0
    \end{pmatrix} \in \mathbb R^{5\times 3},
\end{equation}
which satisfy $\mat P_k \mat L_k = \mat I$, since $\vec\nu_k$ has unit length.
The restriction $\mat P_k$ performs the same projection of the momentum onto $\vec\nu_k$ that we applied to the governing equations, while the lift $\mat L_k$ maps an increment of the vertical momentum back to an increment of the Cartesian momentum along $\vec\nu_k$.
We collect the nodal operators into the block-diagonal matrices $\mat P$ and $\mat L$, which also reorder the degrees of freedom column by column as in $\state U^V$.
The approximate Jacobian of the full right-hand side in \eqref{eq:rosenbrockW} is then
\begin{equation}
    \mat J \coloneqq \mat L\, \mat J^V \mat P,
\end{equation}
which vanishes on all horizontal couplings and on the momentum components orthogonal to $\vec\nu$, and is therefore an admissible choice for a W-method.

Denoting by $\state B_i$ the right-hand side of \eqref{eq:rosenbrockW} for stage $i$ and applying $\mat P$ to the stage equation, the property $\mat P\mat L = \mat I$ yields the reduced column-wise systems
\begin{equation}\label{eq:vertical_stage}
    \left( \frac{1}{\hat \gamma \Delta t} \mat{I} - \mat{J}^V \right) \state{K}^V_i = \mat P\, \state B_i,
    \qquad \state K^V_i \coloneqq \mat P\, \state K_i.
\end{equation}
Once $\state K^V_i$ is known, the stage equation gives $\state K_i = \hat \gamma\Delta t\,(\state B_i + \mat L\,\mat J^V \state K^V_i)$, and eliminating $\mat J^V \state K^V_i$ with \eqref{eq:vertical_stage} leads to
\begin{equation}\label{eq:vertical_lift}
    \state K_i = \hat \gamma\Delta t\, \state B_i + \mat L \left( \state K^V_i - \hat \gamma\Delta t\, \mat P\,\state B_i \right).
\end{equation}
Equation \eqref{eq:vertical_lift} has a simple interpretation.
The density and potential-temperature components of $\state K_i$ coincide with those of $\state K^V_i$.
For the momentum, the component of the increment along $\vec\nu_k$ is replaced by the implicit vertical solution, while the components orthogonal to it are treated explicitly, i.e.,
\begin{equation}
    \state K^{\varrho\vec v}_{i,k} = \hat \gamma\Delta t\,\big(\mat I - \vec\nu_k\vec\nu_k^T\big)\,\state B^{\varrho\vec v}_{i,k} + \vec\nu_k\, K^{\varrho\tilde w}_{i,k}.
\end{equation}
In summary, every time a linear solve is required in \eqref{eq:rosenbrockW}, we project the right-hand side onto the vertical subsystem with $\mat P$, solve the independent column systems \eqref{eq:vertical_stage}, and lift the result back with \eqref{eq:vertical_lift}.
The matrices $\mat P$ and $\mat L$ are never formed explicitly.
Both act node by node: the restriction copies the density and the potential-temperature density and computes the vertical momentum as the scalar product $\vec\nu_k\cdot(\varrho\vec v)_k$, and the lift adds the increment of the vertical momentum to the momentum along $\vec\nu_k$.
Hence, both operations only require the nodal versors $\vec\nu_k$, and their cost is negligible compared with that of the column solves.
Both projections act node by node and are negligible in cost compared with the column solves.
As in any Rosenbrock-type method, $\mat J^V$ is assembled only once per time step, at $\state U^n$, and re-used in all stages.
Since W-methods only require an approximate Jacobian, this matrix can moreover neglect the horizontal couplings, use frozen coefficients without reducing the order of the scheme, and even be frozen for more than one time step.
The latter advantage is not explored in this work.
Since $\hat \gamma$ is the same for all stages, the same holds for the operator $\tfrac{1}{\hat \gamma\Delta t}\mat I - \mat J^V$ and for its nested Schur-complement factorization described in Section~\ref{sec:nestedschur}. 

\begin{remark}[Other formulations of the Euler equations]
\label{rem:other_formulations}
As noted in Section~\ref{sec:equations}, the pressure depends only on the potential-temperature density $\varrho\theta$ in the formulation \eqref{eq:EulerPotentialTemperature0}.
Consequently, the density does not appear in the approximate flux Jacobian \eqref{eq:fluxJac_app} at all, and it enters the vertical subsystem only through the pointwise gravity term.
This is not the case for other formulations of the compressible Euler equations: if the thermodynamic equation is written, e.g., for the total energy density, the pressure depends on all prognostic variables, and the corresponding flux Jacobian is considerably denser.
In Section~\ref{sec:nestedschur}, we exploit this sparsity to eliminate density and potential-temperature density analytically through local Schur complements.

The methods presented here can nonetheless be applied to dynamical cores based on other formulations.
Since Rosenbrock-W methods admit any approximation of the Jacobian, the vertical subsystems can always be formulated in terms of $(\varrho, \varrho\tilde w, \varrho\theta)$.
To this end, the state at which the Jacobian is assembled is converted to the potential-temperature formulation, and, in every stage, the right-hand side $\state B_i$ and the stage vector $\state K_i$ are mapped between both sets of variables with the linearization of this change of variables.
These transformations act node by node and can be fused into the projections \eqref{eq:restriction_lift}, so their cost is small compared with the column solves.

Using the potential-temperature formulation in both the explicit and the implicit parts is nonetheless beneficial for three reasons: (i) no transformations are required; (ii) since the approximate Jacobian is derived from the same conservative discretization, the implicit stages conserve $\varrho\theta$ to machine precision, whereas the node-wise transformations generally break the discrete conservation of the thermodynamic variable of the dynamical core in the implicit stages (a proof of Rosenbrock-W's conservation of linear invariants is provided in Appendix~\ref{app:linear_invariants}); and (iii) structure-preserving discretizations are available for this formulation \cite{artiano2025structure}.
\end{remark}

\section{Nested Schur-complement approach}
\label{sec:nestedschur}

As identified in \cite{rueda2021statically,laskowski2020advantages,pan2025half}, LGL-DGSEM discretizations produce sparsity and connectivity patterns that admit efficient implicit solvers based on Schur-complement reductions of the permuted Jacobian matrix.
In the CG and DG literature, these approaches, first introduced for continuous finite elements in \cite{guyan1965reduction}, are commonly referred to as \textit{static condensation} methods.
In such methods, element-interior degrees of freedom are eliminated locally, reducing the size of the globally coupled linear system by a factor that depends on the polynomial degree and spatial dimensionality of the discretization.
This technique is closely related to hybridized discontinuous Galerkin (HDG) methods \cite{cockburn2009unified,nguyen2009implicit,nguyen2009nonlinear,nguyen2011implicit}.
This reduction is particularly effective for vertically one-dimensional formulations, such as those arising in HEVI methods.

On a different line, \citet{giraldo2010semi} derived different Schur formulations for semi-implicit atmospheric models and studied how the choice of prognostic variables affects the structure of the resulting linear systems.
These formulations apply Schur-complement reductions to the linearized algebraic systems in order to eliminate selected prognostic variables and obtain smaller coupled problems.
More recently, these techniques have been combined successfully with DG discretizations in \cite{reddy2023schur}, where significant speedups were demonstrated by reducing the effective system size to approximately one fifth of the original three-dimensional problem.

In this section, we combine these two strategies into a \emph{nested} Schur-complement method, applied independently to the system matrix $\tfrac{1}{\hat\gamma\Delta t}\mat I - \mat J^V$ of each column of $N_z$ vertically stacked elements, i.e., to the column systems \eqref{eq:vertical_stage}.
Before describing the two levels at which this nesting occurs, we briefly recall the underlying algebraic tool.
Given a linear system partitioned into two groups of unknowns $\state x$ and $\state y$,
\begin{equation}\label{eq:schurgeneric}
	\begin{pmatrix} \mat{\mathcal A} & \mat{\mathcal B} \\ \mat{\mathcal C} & \mat{\mathcal D} \end{pmatrix}
	\begin{pmatrix} \state x \\ \state y \end{pmatrix}
	=
	\begin{pmatrix} \state f \\ \state g \end{pmatrix},
\end{equation}
solving the first block row for $\state x = \mat{\mathcal A}^{-1}(\state f - \mat{\mathcal B} \state y)$ (assuming $\mat{\mathcal A}$ is invertible) and substituting into the second row eliminates $\state x$ entirely, leaving the smaller system
\begin{equation}\label{eq:schurgeneric2}
	\left(\mat{\mathcal D} - \mat{\mathcal C} \mat{\mathcal A}^{-1} \mat{\mathcal B}\right) \state y = \state g - \mat{\mathcal C} \mat{\mathcal A}^{-1} \state f
\end{equation}
for $\state y$ alone.
The matrix $\mat{\mathcal S} = \mat{\mathcal D} - \mat{\mathcal C}\mat{\mathcal A}^{-1}\mat{\mathcal B}$ is the \emph{Schur complement} of $\mat{\mathcal A}$ in the full system \eqref{eq:schurgeneric}; once $\state y$ is obtained from the reduced system \eqref{eq:schurgeneric2}, $\state x$ is recovered by back-substitution.
Eliminating $\state x$ this way is only worthwhile when $\mat{\mathcal A}$ is itself cheap to invert -- e.g., block-diagonal, or otherwise structured -- so that forming $\mat{\mathcal A}^{-1}\mat{\mathcal B}$ and $\mat{\mathcal A}^{-1}\state f$ is inexpensive.

We apply exactly this mechanism twice, at two different scales:
\begin{itemize}
	\item A \emph{local} Schur complement, performed once per element, that exploits the prognostic-variable block structure to analytically eliminate two of the three implicit variables from each elemental block $\mat{\mathcal A}_e$ (Section~\ref{sec:localschur}). This local elimination is analogous to the strategies in \cite{giraldo2010semi, reddy2023schur}.
	\item A \emph{global} Schur complement, performed once per column, that eliminates the (now cheaply invertible) element-interior unknowns in favor of a small set of elemental edge unknowns (Section~\ref{sec:globalschur}). This global elimination is inspired by the methods of \cite{rueda2021statically,laskowski2020advantages,pan2025half}.
\end{itemize}
The two condensation strategies compared in this paper, \emph{two-sided} and \emph{one-sided}, share the same local elimination, but assign degrees of freedom (DOFs) to the local block $\mat{\mathcal A}$ versus the globally coupled block $\mat{\mathcal D}$ differently; each is described as a self-contained strategy in Sections~\ref{sec:twosided} and~\ref{sec:onesided}.

\subsection{Local Schur complement: ordering and elimination of prognostic variables}
\label{sec:localschur}

In the remainder of this section, we number the LGL nodes of each element as $1,\dots,M$, with $M = \polydeg + 1$, instead of using the indexing $0,\dots,\polydeg$ as in Section~\ref{sec:hevidgsem}.
In particular, nodes $1$ and $M$ are the lower and upper edge nodes of the element.
The vertical right-hand side and its Jacobian are commonly assembled following the layout of the DGSEM solution array, which stores the three implicitly treated variables interleaved node by node: for an element with $M$ LGL nodes, the common ordering lists $(\varrho, \varrho\tilde w, \varrho\theta)$ at node 1, then at node 2, and so on.
Figure~\ref{fig:var_perm_natural} shows the resulting sparsity pattern of the diagonal block of the system matrix for one element.
In this ordering, the coupling structure is scattered throughout the matrix rather than organized into clean blocks.

Grouping the DOFs by variable instead, i.e., using the ordering $\state U^V$ of Section~\ref{sec:jacobian_hevi} (first all $\varrho$ DOFs of the element, then all $\varrho\theta$, then all $\varrho\tilde w$), is itself a permutation, and it brings the elemental block $\mat{\mathcal A}_e \coloneqq \tfrac{1}{\hat\gamma\Delta t}\mat I - \mat J^{(e,e)}$ into the block form
\begin{equation}\label{eq:Ablockstructure}
	\mat{\mathcal A}_e =
	\begin{pmatrix}
		\mat{\mathcal A}_{11} & \mat{\mathcal A}_{12} & \mat{\mathcal A}_{13} \\
		\mat{\mathcal A}_{21} & \mat{\mathcal A}_{22} & \mat{\mathcal A}_{23} \\
		\mat{\mathcal A}_{31} & \mat{\mathcal A}_{32} & \mat{\mathcal A}_{33}
	\end{pmatrix},
\end{equation}
shown in Figure~\ref{fig:var_perm_grouped}, where block rows/columns $1,2,3$ correspond to $\varrho, \varrho\theta, \varrho\tilde w$ and each block is of size $M\times M$.
From \eqref{eq:Jac_ee}, the blocks read
\begin{equation}\label{eq:Ablocks}
\begin{aligned}
	&\mat{\mathcal A}_{11} = \tfrac{1}{\hat\gamma\Delta t}\mat I,
	&&\mat{\mathcal A}_{12} = \tfrac{1}{2c_0} \big(\mat{\mathcal M}^{(e)}\big)^{-1} \mat B_{n}^{(e)}\,\mat\Lambda(\stateG\alpha^{(e)}),
	&&\mat{\mathcal A}_{13} = \tfrac12 \big(\mat{\mathcal M}^{(e)}\big)^{-1} \mat S\,\mat\Lambda(\state n^{(e)}),\\
	&\mat{\mathcal A}_{21} = \mat 0,
	&&\mat{\mathcal A}_{22} = \tfrac{1}{\hat\gamma\Delta t}\mat I + \tfrac{1}{2c_0} \big(\mat{\mathcal M}^{(e)}\big)^{-1} \mat B_{n\theta}^{(e)}\,\mat\Lambda(\stateG\alpha^{(e)}),
	&&\mat{\mathcal A}_{23} = \tfrac12 \big(\mat{\mathcal M}^{(e)}\big)^{-1} \mat S\,\mat\Lambda(\state n^{(e)}\stateG\theta^{(e)}),\\
	&\mat{\mathcal A}_{31} = \mat\Lambda(\tilde{\state g}^{(e)}),
	&&\mat{\mathcal A}_{32} = \tfrac12 \big(\mat{\mathcal M}^{(e)}\big)^{-1} \mat S\,\mat\Lambda(\state n^{(e)}\stateG\alpha^{(e)}),
	&&\mat{\mathcal A}_{33} = \tfrac{1}{\hat\gamma\Delta t}\mat I + \tfrac{c_0}{2} \big(\mat{\mathcal M}^{(e)}\big)^{-1} \mat B_{n}^{(e)}.
\end{aligned}
\end{equation}
The blocks $\mat{\mathcal A}_{11}$, $\mat{\mathcal A}_{22}$, and $\mat{\mathcal A}_{33}$ are diagonal: the density and potential-temperature equations do not depend on the density, so $\mat{\mathcal A}_{11}$ only contains the time-discretization term, while $\mat{\mathcal A}_{22}$ and $\mat{\mathcal A}_{33}$ add to it the LMARS dissipation, which only acts on the two edge nodes of the element (Remark~\ref{rem:lmars}).
For the same reason, $\mat{\mathcal A}_{12}$ has only two nonzero entries, one at each edge node.
The gravity block $\mat{\mathcal A}_{31}$ is diagonal because we use the standard, pointwise gravity source term instead of the split gravity term.
The remaining blocks $\mat{\mathcal A}_{13}$, $\mat{\mathcal A}_{23}$, and $\mat{\mathcal A}_{32}$ are dense, inherited from the skew-symmetric SBP operator $\mat S$; hence, their diagonal vanishes, except at the physical bottom and top boundaries, where the wall flux contributes to the corner entries.

\begin{remark}[Effect of the LMARS surface flux]
\label{rem:lmars}
The blocks $\mat{\mathcal A}_{12}$, $\mat{\mathcal A}_{22}$, and $\mat{\mathcal A}_{33}$ contain the linearized LMARS dissipation of the numerical fluxes \eqref{eq:vertical_numflux} \cite{chen2013control}, which only acts at the two edge nodes of each element.
In particular, the dissipation of the density flux is proportional to the jump of $\tilde p$, which depends on $\varrho\theta$; this is the origin of the coupling between $\varrho$ and $\varrho\theta$ in $\mat{\mathcal A}_{12}$, which, for an interior element, is zero everywhere except at its two corner entries $(1,1)$ and $(M,M)$.
At the two physical (bottom and top) boundaries, the reflecting-wall flux has no pressure jump, so the corresponding corner entries of $\mat{\mathcal A}_{12}$ and $\mat{\mathcal A}_{22}$ vanish, whereas the corner entry of $\mat{\mathcal A}_{33}$ remains.
\end{remark}

\begin{figure}
	\centering
	\begin{subfigure}[b]{0.4\linewidth}
		\centering
		\includegraphics[width=\linewidth]{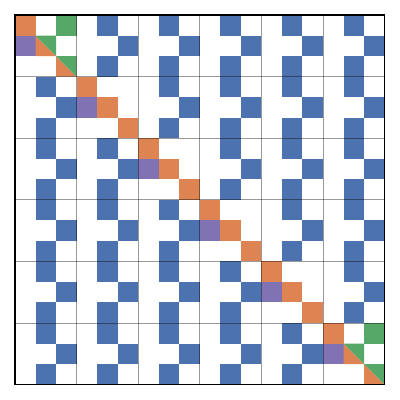}
		\caption{Natural, node-major ordering.}
		\label{fig:var_perm_natural}
	\end{subfigure}
	\hspace{0.05\linewidth}
	\begin{subfigure}[b]{0.4\linewidth}
		\centering
		\includegraphics[width=\linewidth]{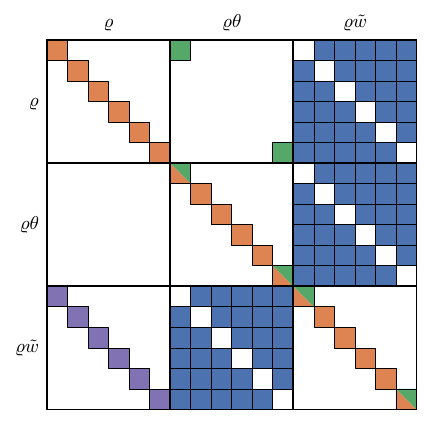}
		\caption{Grouped by variable.}
		\label{fig:var_perm_grouped}
	\end{subfigure}

	\vspace{0.5em}
	\includegraphics[width=0.9\linewidth]{figs/legend_sparsity.pdf}

	\caption{Sparsity pattern of the diagonal block of $\tfrac{1}{\hat\gamma\Delta t}\mat I - \mat J^V$ for one generic (interior) element.
	\subref{fig:var_perm_natural} Natural, node-major ordering: $\varrho, \varrho\tilde w, \varrho\theta$ interleaved node by node; the block structure of Eq.~\eqref{eq:Ablockstructure} is present, but not visible.
	\subref{fig:var_perm_grouped} After grouping the DOFs by variable ($\varrho$, then $\varrho\theta$, then $\varrho\tilde w$), the same nonzero pattern rearranges into the block form of Eq.~\eqref{eq:Ablockstructure} (the two isolated corner entries in the $(\varrho,\varrho\theta)$ block are the LMARS coupling of Remark~\ref{rem:lmars}).
	}
	\label{fig:var_perm}
\end{figure}

It is precisely this structure -- two ``light'' variables ($\varrho$, $\varrho\theta$) that are (almost) diagonally self-coupled, and one ``heavy'' variable ($\varrho\tilde w$) that carries all of the dense LGL coupling -- that enables the local Schur complement of $\mat{\mathcal A}_e$ to be taken analytically, once per element, independently of anything happening at the element edges.

At the level of the whole column, Figure~\ref{fig:Jperm_global} in Section~\ref{sec:jacobian_hevi} shows the sparsity pattern of the system matrix in two orderings: the natural, node-major ordering of the solution array (Fig.~\ref{fig:Jperm}) and the ordering $\state U^V$, which groups the DOFs of each element by variable (Fig.~\ref{fig:Jperm_only_vars}).
At each interface, the three prognostic variables of one side's edge node couple to $\varrho\theta$ and $\varrho\tilde w$ of the neighbor's edge node through the LMARS common interface state, cf.\ the coupling blocks \eqref{eq:Jac_em1} and \eqref{eq:Jac_ep1}.
In the natural ordering, these couplings form contiguous off-diagonal blocks of size $3\times2$ (Fig.~\ref{fig:Jperm}), whereas in the grouped ordering the same entries are scattered across the element blocks (Fig.~\ref{fig:Jperm_only_vars}).
In both orderings, the column matrix is block tridiagonal, with one block of size $3M\times3M$ per element and off-diagonal blocks that only couple neighboring elements.
This structure can be exploited by solving the system with the block Thomas algorithm, i.e., a block LU factorization that sweeps once through the column, factorizing one $3M\times3M$ block per element.
Such an approach treats the element blocks as dense; the nested Schur-complement approach developed in this section, in contrast, exploits their internal structure.

\subsection{Global Schur complement: interior/edge elimination}
\label{sec:globalschur}

Having established that each elemental block $\mat{\mathcal A}_e$ is cheap to invert (Section~\ref{sec:localschur}), the second, global level of the method applies the same Schur-complement mechanism of Eqs.~\eqref{eq:schurgeneric}--\eqref{eq:schurgeneric2} once more, at the scale of the whole column.
Reordering the system matrix of the column so that all DOFs assigned to the local block precede all DOFs assigned to the globally coupled block yields the $2\times2$ partition
\begin{equation}\label{eq:ABCD}
	\frac{1}{\hat\gamma\Delta t}\mat I - \mat J^V
	\;\longrightarrow\;
	\begin{pmatrix}
		\mat{\mathcal A} & \mat{\mathcal B} \\
		\mat{\mathcal C} & \mat{\mathcal D}
	\end{pmatrix},
\end{equation}
where $\mat{\mathcal A}$ is block-diagonal with one cheaply invertible block $\mat{\mathcal A}_e$ per element, and $\mat{\mathcal B}$ and $\mat{\mathcal C}$ couple the $\mat{\mathcal A}$-DOFs of each element only to its own $\mat{\mathcal D}$-DOFs and to those of its neighboring elements.
Since part of the element DOFs are moved to $\mat{\mathcal D}$, the block $\mat{\mathcal A}_e$ in \eqref{eq:ABCD} is the restriction of \eqref{eq:Ablockstructure} to the DOFs retained in $\mat{\mathcal A}$; it keeps the $3\times3$ block structure of \eqref{eq:Ablockstructure}, with correspondingly smaller blocks.
Because $\mat{\mathcal A}$ is block-diagonal, its inverse is computed independently and in parallel, one element at a time; the global Schur complement $\mat{\mathcal S} = \mat{\mathcal D} - \mat{\mathcal C} \mat{\mathcal A}^{-1} \mat{\mathcal B}$ is then a small system of size $\mathcal O(N_z)$ rather than $\mathcal O(N_z M)$.

In this paper, we devise two strategies to permute the system matrix of Fig.~\ref{fig:Jperm_only_vars}, such that the block structure of Eq.~\eqref{eq:ABCD} is obtained with a block-diagonal matrix $\mat{\mathcal A}$.
These two strategies will be termed \emph{two-sided interface condensation} and \emph{one-sided interface condensation}.
Exactly {which} DOFs are assigned to $\mat{\mathcal A}$ versus $\mat{\mathcal D}$ differs between the two strategies: they draw this line differently, per prognostic variable and per element side.
Figure~\ref{fig:two_strategies_assignment} summarizes this assignment node by node for one element, and Figs.~\ref{fig:two_strategies_twosided} and~\ref{fig:two_strategies_onesided} mark the rows and columns of the column matrix that each strategy moves to $\mat{\mathcal D}$.
In both cases, the same assignment is applied to every element, including the first and last elements of the column, so that the data structures are identical for all elements (see Remark~\ref{rem:topboundary}).
We now describe each strategy in full.

\begin{figure}
	\centering
	\begin{subfigure}[b]{0.28\linewidth}
		\centering
		\includegraphics[width=\linewidth]{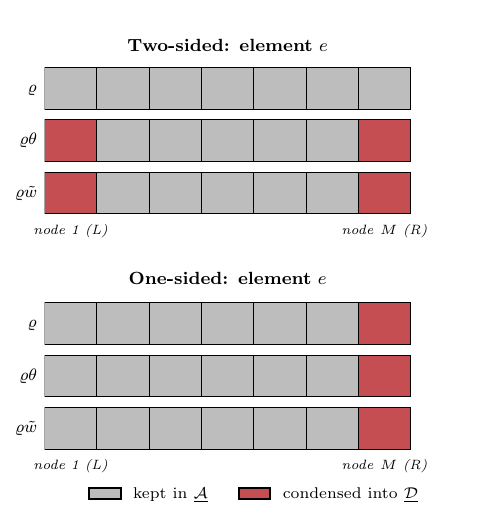}
		\caption{DOF assignment per element.}
		\label{fig:two_strategies_assignment}
	\end{subfigure}
	\hfill
	\begin{subfigure}[b]{0.34\linewidth}
		\centering
		\includegraphics[width=\linewidth]{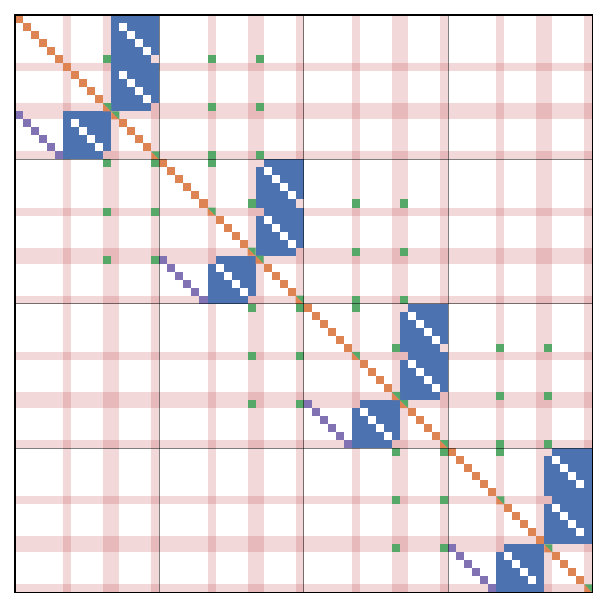}
		\caption{Two-sided condensation.}
		\label{fig:two_strategies_twosided}
	\end{subfigure}
	\hfill
	\begin{subfigure}[b]{0.34\linewidth}
		\centering
		\includegraphics[width=\linewidth]{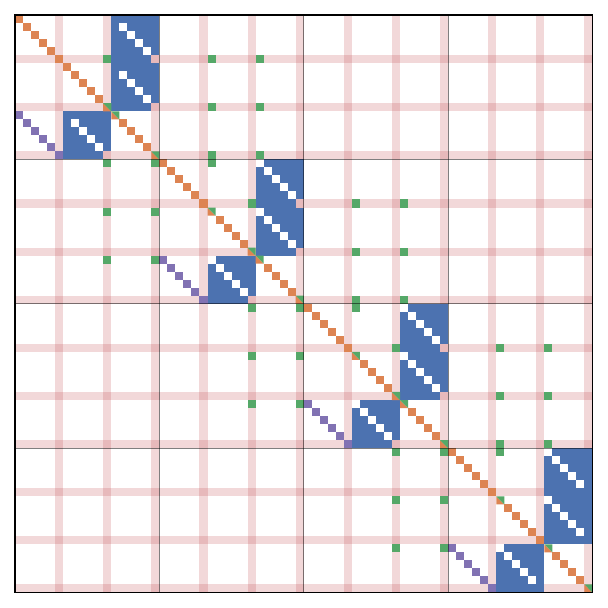}
		\caption{One-sided condensation.}
		\label{fig:two_strategies_onesided}
	\end{subfigure}

	\vspace{0.5em}
	\includegraphics[width=0.9\linewidth]{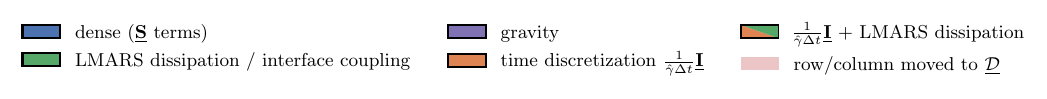}

	\caption{Assignment of the DOFs to the local block $\mat{\mathcal A}$ versus the globally coupled block $\mat{\mathcal D}$.
	\subref{fig:two_strategies_assignment} Node-wise assignment for one element: two-sided condensation never condenses $\varrho$, and condenses $\varrho\theta$ and $\varrho\tilde w$ at \emph{both} element edges; one-sided condensation condenses all three variables, but only at a single edge (node $M$) per element, leaving node $1$ -- a genuine interface DOF -- inside $\mat{\mathcal A}$.
	\subref{fig:two_strategies_twosided}, \subref{fig:two_strategies_onesided} System matrix of Fig.~\ref{fig:Jperm_only_vars} ($N_z=4$, $M=6$) with the rows and columns moved to $\mat{\mathcal D}$ marked in red for each strategy.
	}
	\label{fig:two_strategies}
\end{figure}

\subsubsection{Two-sided interface condensation}
\label{sec:twosided}

In the two-sided strategy, the density is \emph{never} condensed: all $\varrho$ unknowns of an element, including those at its two edge nodes, are assigned to $\mat{\mathcal A}$.
Only the potential-temperature and vertical-momentum unknowns at the two edge nodes (node $1$ and node $M$) of each element are assigned to $\mat{\mathcal D}$ (Fig.~\ref{fig:two_strategies_twosided}), giving $\mat{\mathcal D}$ four rows/columns per element, ordered as $\big((\varrho\theta)_1, (\varrho\tilde w)_1, (\varrho\tilde w)_M, (\varrho\theta)_M\big)$, and a total size of $4N_z\times 4N_z$.
The permuted system matrix of the column is shown in Figure~\ref{fig:twosided_full}.

\begin{figure}
	\centering
	\includegraphics[width=0.6\linewidth]{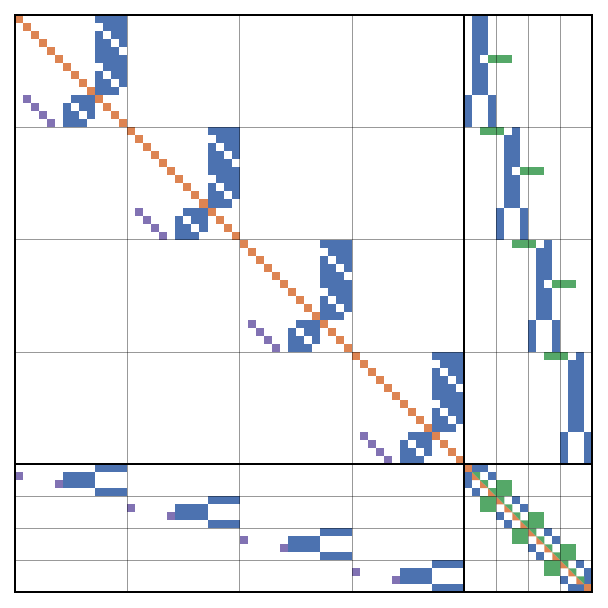}
	\caption{Sparsity pattern of the column system matrix after the two-sided permutation ($N_z=4$, $M=6$); the thick lines separate the blocks $\mat{\mathcal A}$, $\mat{\mathcal B}$, $\mat{\mathcal C}$, and $\mat{\mathcal D}$ of Eq.~\eqref{eq:ABCD}.
	The local block $\mat{\mathcal A}$ (top left) is block-diagonal, with one block per element in which density, potential temperature, and momentum are grouped separately.
	Colors as in Fig.~\ref{fig:var_perm}.}
	\label{fig:twosided_full}
\end{figure}

A direct consequence is that the local block $\mat{\mathcal A}_e$ is no longer uniform across variables: $\varrho$ retains all $M$ nodes, while $\varrho\theta$ and $\varrho\tilde w$ retain only the $M-2$ interior nodes (see Fig.~\ref{fig:AS_comparison}, top left).
The blocks of Eq.~\eqref{eq:Ablockstructure} survive with correspondingly rectangular shapes; e.g., the dense block $\mat{\mathcal A}_{13}$ is now of size $M\times(M-2)$ rather than square.
Moreover, since both corner entries of $\mat{\mathcal A}_{12}$ belong to the condensed $\varrho\theta$ DOFs, we obtain $\mat{\mathcal A}_{12} = \mat 0$.
As a result, the Schur complement of each elemental block reads
\begin{equation}\label{eq:SA}
	\mat{\mathcal S}_{e} = \mat{\mathcal A}_{33} - \mat{\mathcal A}_{31}\mat{\mathcal A}_{11}^{-1}\mat{\mathcal A}_{13} - \mat{\mathcal A}_{32}\mat{\mathcal A}_{22}^{-1}\mat{\mathcal A}_{23},
\end{equation}
where we recall that $\mat{\mathcal A}_{11}$ and $\mat{\mathcal A}_{22}$ are diagonal and, hence, trivially invertible.

As can be seen in Fig.~\ref{fig:AS_comparison} (middle left), $\mat{\mathcal D}$ is itself block-tridiagonal in the element index before any elimination has taken place, since the edge unknowns of neighboring elements couple directly through the numerical flux.
The correction $-\mat{\mathcal C}\mat{\mathcal A}^{-1}\mat{\mathcal B}$ fills the diagonal blocks of the Schur complement $\mat{\mathcal S}$, without changing the block-tridiagonal footprint (Fig.~\ref{fig:AS_comparison}, bottom left).
Hence, $\mat{\mathcal S}$ has dense $4\times 4$ diagonal blocks.
The sub- and super-diagonal blocks are also $4\times 4$, but two rows and two columns of each off-diagonal block are identically zero, leaving a single dense $2\times 2$ subblock as the only coupling between adjacent elements: rows 3--4 and columns 1--2 in each super-diagonal block, and rows 1--2 and columns 3--4 in each sub-diagonal block.
As a banded matrix, $\mat{\mathcal S}$ therefore has lower bandwidth $k_l = 3$ and upper bandwidth $k_u = 3$, for a total of seven nonzero bands, and it is factorized by a self-written banded LU decomposition without pivoting at a cost of approximately $2 \cdot 4N_z \cdot k_l \cdot k_u = 72\,N_z$ floating-point operations per column, where we count a multiply--add as two operations.
Forward--backward substitution costs approximately $2 \cdot 4N_z \cdot (k_l + k_u) = 48\,N_z$ operations per right-hand side.

\subsubsection{One-sided interface condensation}
\label{sec:onesided}

In the one-sided strategy, all three variables ($\varrho, \varrho\theta, \varrho\tilde w$) are condensed, but only at \emph{one} edge per element: the unknowns at node $M$ (the upper edge) are assigned to $\mat{\mathcal D}$, whereas node $1$ (the lower edge, a genuine interface DOF coupled to the element below) is kept in $\mat{\mathcal A}$, together with all interior nodes (Fig.~\ref{fig:two_strategies_onesided}).
This gives $\mat{\mathcal D}$ three rows/columns per element, ordered as $\big(\varrho_M, (\varrho\tilde w)_M, (\varrho\theta)_M\big)$ for consistency with our implementation, and a total size of $3N_z\times 3N_z$, smaller than the $4N_z\times 4N_z$ of the two-sided strategy.
The permuted system matrix of the column is shown in Figure~\ref{fig:onesided_full}.

\begin{figure}
	\centering
	\includegraphics[width=0.6\linewidth]{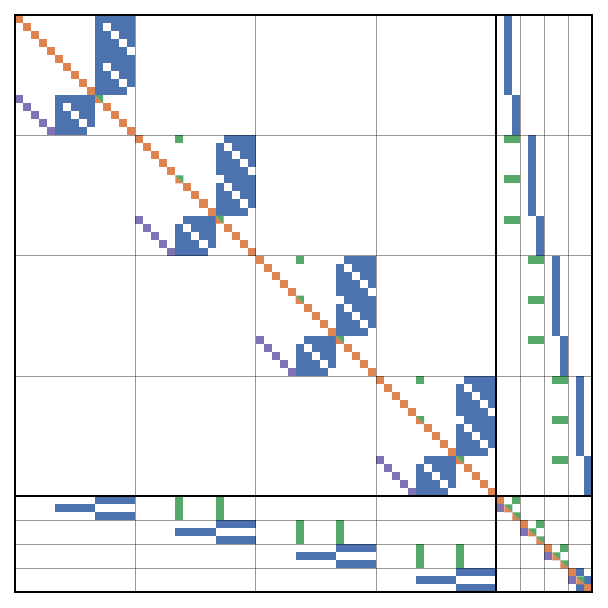}
	\caption{Sparsity pattern of the column system matrix after the one-sided permutation ($N_z=4$, $M=6$); the thick lines separate the blocks $\mat{\mathcal A}$, $\mat{\mathcal B}$, $\mat{\mathcal C}$, and $\mat{\mathcal D}$ of Eq.~\eqref{eq:ABCD}.
	Compared to Fig.~\ref{fig:twosided_full}, the local block $\mat{\mathcal A}$ absorbs one extra edge node (node $1$) per element and is uniform across variables.
	Colors as in Fig.~\ref{fig:var_perm}.}
	\label{fig:onesided_full}
\end{figure}

A direct consequence is that the local block $\mat{\mathcal A}_e$ is uniform across variables: $\varrho$, $\varrho\theta$, and $\varrho\tilde w$ retain the same $M-1$ nodes, so $\mat{\mathcal A}_e$ has exactly the structure of Eq.~\eqref{eq:Ablockstructure}, with blocks that are one node smaller (see Fig.~\ref{fig:AS_comparison}, top right).
This is a simpler, more symmetric situation than the non-uniform $\mat{\mathcal A}_e$ of the two-sided strategy.
Since node $1$ is retained in $\mat{\mathcal A}_e$, the block $\mat{\mathcal A}_{12}$ keeps the corner entry of Remark~\ref{rem:lmars} at node $1$ (except in the first element, where it vanishes), while the node-$M$ corner entry moves, together with the rest of node $M$, into $\mat{\mathcal D}$.
An expression similar to \eqref{eq:SA} is used to obtain the Schur complement of the elemental block, with a rank-one correction due to the single nonzero entry of $\mat{\mathcal A}_{12}$.

A further consequence of assigning node $1$ to $\mat{\mathcal A}$ rather than $\mat{\mathcal D}$ is that the raw block $\mat{\mathcal D}$ of Eq.~\eqref{eq:ABCD} has \emph{no} off-diagonal blocks at all: the interface partners of the node-$M$ unknowns of element $e$ are the node-$1$ unknowns of element $e+1$, and since the latter belong to $\mat{\mathcal A}$, the corresponding entries fall inside $\mat{\mathcal B}$ and $\mat{\mathcal C}$, which couple $\mat{\mathcal D}$ to the neighbor's \emph{local} block, and not inside $\mat{\mathcal D}$ itself.
$\mat{\mathcal D}$ is therefore exactly block-diagonal (Fig.~\ref{fig:AS_comparison}, middle right).
The block-tridiagonal structure of the Schur complement $\mat{\mathcal S}$ (Fig.~\ref{fig:AS_comparison}, bottom right) is produced \emph{entirely} by the correction $-\mat{\mathcal C}\mat{\mathcal A}^{-1}\mat{\mathcal B}$ of Eq.~\eqref{eq:schurgeneric2}.
This is a structurally different mechanism from the two-sided strategy, where the tridiagonal coupling is already present, directly, in the interface flux.

The diagonal blocks of $\mat{\mathcal S}$ are dense $3\times 3$ matrices with one structural zero: the $(3,1)$ entry, which couples the density edge unknown to the potential-temperature edge equation, vanishes identically for all elements.
Similarly, the sub- and super-diagonal blocks have an identically zero first column, associated with the density edge unknown, and are therefore stored as $3\times 2$ matrices (\textit{LU-blocks}).
Both zeros have the same origin: the density enters the momentum equation only through the pointwise gravity term, so the column of $\mat{\mathcal B}$ associated with $\varrho_M$ vanishes, and the density column of $\mat{\mathcal S}$ coincides with that of the block-diagonal $\mat{\mathcal D}$.
The $(1,3)$ entry of each diagonal block, by contrast, is generally nonzero; in $\mat{\mathcal D}$, it is exactly the node-$M$ LMARS corner entry of Remark~\ref{rem:lmars}, which lives entirely inside $\mat{\mathcal D}$ since node $M$ is condensed in every element.

As a conventional banded matrix, $\mat{\mathcal S}$ has lower bandwidth $k_l = 4$ and upper bandwidth $k_u = 5$, for a total of ten nonzero bands, so that a generic banded LU decomposition would cost approximately $2 \cdot 3N_z \cdot k_l \cdot k_u = 120\,N_z$ floating-point operations per column.
Exploiting the block-tridiagonal structure is more efficient: we apply a block Thomas (tridiagonal) algorithm in which, owing to the zero first column of the LU-blocks and the zero $(3,1)$ entry, the density unknown of each block can be eliminated with scalar operations, and the active Schur complement at each elimination step reduces to a dense $2\times 2$ system that can be inverted analytically.
Each step of the block factorization requires the scalar elimination of the density unknown ($\approx 14$ operations), two products of $2\times2$ matrices ($\approx 28$ operations), and one analytical $2\times 2$ inversion ($\approx 8$ operations), for a total of approximately $50N_z$ floating-point operations per column.
Forward--backward substitution with the factored system costs approximately $36N_z$ operations per right-hand side, compared with $2 \cdot 3N_z \cdot (k_l + k_u) = 54\,N_z$ for a generic banded solve.

\begin{figure}
	\centering
	\includegraphics[width=0.72\linewidth]{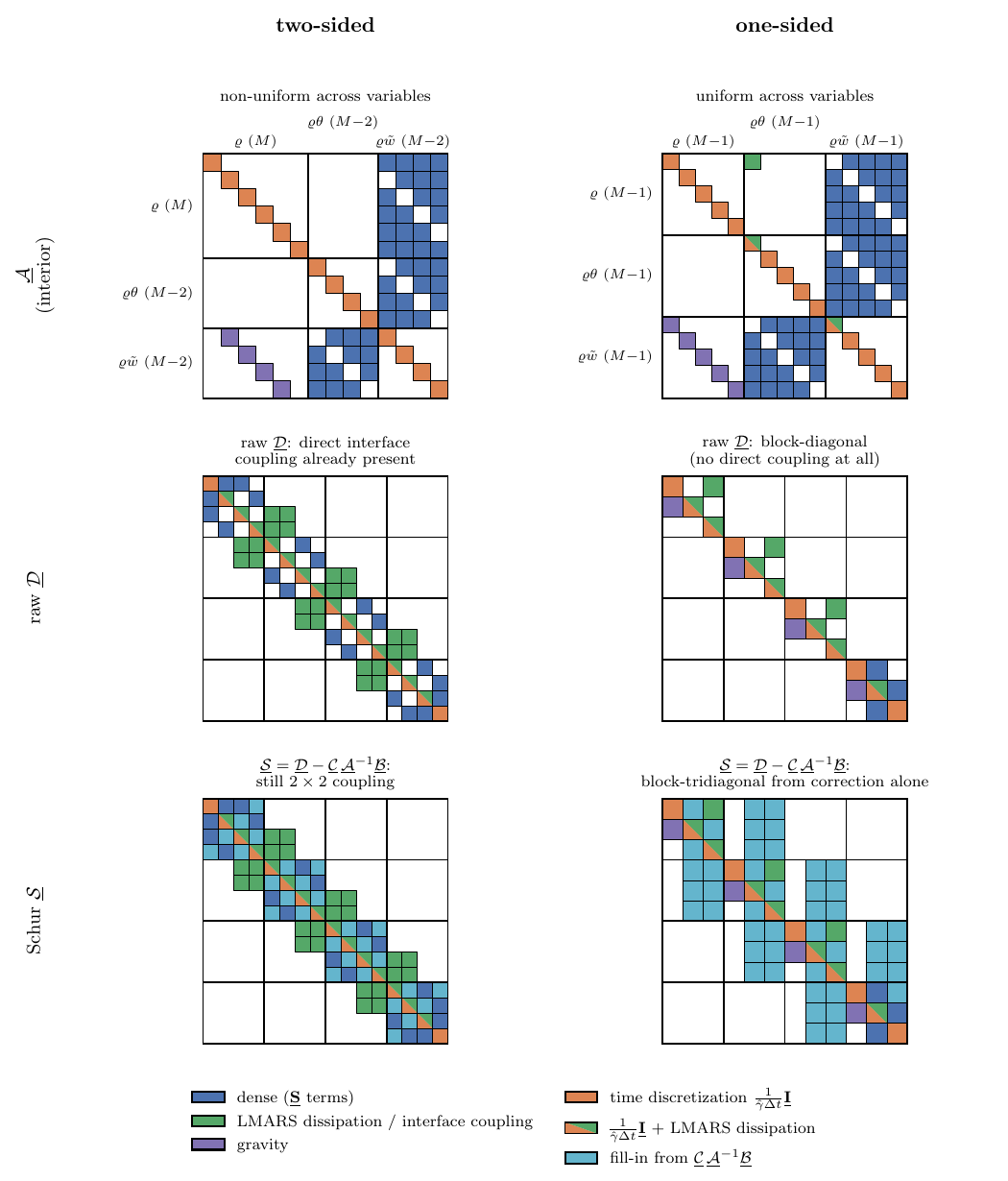}
	\caption{Side-by-side comparison of the local block $\mat{\mathcal A}_e$ of one interior element (top row), the raw block $\mat{\mathcal D}$ before elimination (middle row), and the global Schur complement $\mat{\mathcal S}$ after elimination (bottom row), for the two-sided (left column) and one-sided (right column) strategies, with $N_z=4$ and $M=6$.
	Entries of $\mat{\mathcal S}$ that are zero in $\mat{\mathcal D}$ and created by the correction $-\mat{\mathcal C}\mat{\mathcal A}^{-1}\mat{\mathcal B}$ are marked as fill-in.
	The contrast between the middle and bottom rows is the key structural difference between the two strategies: for one-sided condensation, all of the inter-element coupling in $\mat{\mathcal S}$ is induced by the correction term, since the raw $\mat{\mathcal D}$ has none.}
	\label{fig:AS_comparison}
\end{figure}

\begin{remark}[Treatment of the top boundary]
\label{rem:topboundary}
Both condensation strategies condense the upper edge of the top element (node $M$ of element $N_z$) exactly like every interior interface, even though that node lies on the physical top boundary and therefore never couples to a neighbor above.
It would be possible to leave those DOFs in $\mat{\mathcal A}$ instead, at the cost of a larger, non-uniform block for the last element alone.
We use the uniform version for both strategies, since it keeps every elemental block the same size and shape, which simplifies the implementation at the cost of a fixed, small number of extra condensed DOFs per column ($3$ for one-sided, $2$ for two-sided) that does not grow with $N_z$.
\end{remark}

\section{Implementation}
\label{sec:implementation}

The nested Schur-complement HEVI solvers are built on top of \href{https://github.com/trixi-framework/TrixiAtmo.jl}{TrixiAtmo.jl} \cite{trixiatmo}, the atmospheric extension of the high-order DG framework \href{https://github.com/trixi-framework/Trixi.jl}{Trixi.jl} \cite{schlottkelakemper2025trixi, ranocha2022adaptive,schlottkelakemper2021purely,candelaresi2026massivelyparallelnumericalsimulations}, using the Julia programming language~\cite{bezanson2017julia}.
The solvers are parallelized for CPUs and GPUs with \href{https://github.com/JuliaGPU/KernelAbstractions.jl}{KernelAbstractions.jl} \cite{Churavy_KernelAbstractions_jl}, which is vendor agnostic and supports both NVIDIA and AMD hardware.
For the explicit right-hand side, we use the GPU implementation of Trixi.jl, which is also based on KernelAbstractions.jl and follows the strategy of \citet{waterhouse2026gpu}, exploiting the symmetry of the two-point fluxes in the flux-differencing DG scheme.

Each kernel of the vertical solver is written once and dispatched either to a multithreaded CPU loop or to a GPU kernel, depending on where the solution array resides.
The Rosenbrock-W driver interacts with the vertical solver through two routines: \texttt{update\_jacobian!}, called once per time step, and \texttt{solve\_jacobian!}, called once per stage.
Both condensation strategies share this structure and differ only in the permutation, the resulting block sizes, and the solver for the reduced system.

\paragraph{Assembly and factorization} (\texttt{update\_jacobian!}).
The routine consists of four kernels:
\begin{enumerate}
	\item \emph{Gather:} the state is copied from the DGSEM solution array into a column-wise array, and the momentum is projected onto the vertical direction with the restriction $\mat P$ of Section~\ref{sec:jacobian_hevi}.
	\item \emph{Fill:} for each element, the frozen nodal quantities $\stateG\theta^{(e)}$ and $\stateG\alpha^{(e)}$, the diagonal and corner entries of $\mat{\mathcal A}_e$, the entries of $\mat{\mathcal B}$, $\mat{\mathcal C}$, and $\mat{\mathcal D}$, and the local Schur complement $\mat{\mathcal S}_e$ are computed; $\mat{\mathcal S}_e$ is then factorized with a dense LU decomposition, which we have implemented without pivoting.
	\item \emph{Global Schur complement:} $\mat{\mathcal S} = \mat{\mathcal D} - \mat{\mathcal C}\mat{\mathcal A}^{-1}\mat{\mathcal B}$ is assembled column by column, where the action of $\mat{\mathcal A}_e^{-1}$ is applied through the factorized $\mat{\mathcal S}_e$ and the diagonal blocks $\mat{\mathcal A}_{11}$ and $\mat{\mathcal A}_{22}$.
	\item \emph{Factorization of $\mat{\mathcal S}$:} for the two-sided strategy, $\mat{\mathcal S}$ is stored in banded format with seven bands and factorized with a banded LU decomposition (Section~\ref{sec:twosided}); for the one-sided strategy, $\mat{\mathcal S}$ is stored as a block-tridiagonal matrix with $3\times3$ diagonal and $3\times2$ off-diagonal blocks, and factorized with the specialized block Thomas algorithm of Section~\ref{sec:onesided}.
	Neither factorization uses pivoting.
\end{enumerate}
The dense blocks $\mat{\mathcal A}_{13}$, $\mat{\mathcal A}_{23}$, and $\mat{\mathcal A}_{32}$ and the corresponding entries of $\mat{\mathcal B}$ and $\mat{\mathcal C}$ are not stored: they are products of the LGL derivative matrix with diagonal matrices, and are rebuilt on the fly from the derivative matrix and the two nodal vectors $\stateG\theta^{(e)}$ and $\stateG\alpha^{(e)}$.
Since the kernels are memory bound, the additional arithmetic is cheaper than the memory traffic it saves.

\paragraph{Linear solve} (\texttt{solve\_jacobian!}).
Each stage solve \eqref{eq:vertical_stage} follows the Schur-complement solution procedure of Eqs.~\eqref{eq:schurgeneric}--\eqref{eq:schurgeneric2} and consists of five kernels:
\begin{enumerate}
	\item \emph{Gather:} restriction $\mat P\state b_i$ of the stage right-hand side.
	\item \emph{Forward elimination:} computation of the reduced right-hand side $\state g - \mat{\mathcal C}\mat{\mathcal A}^{-1}\state f$ with element-local solves.
	\item \emph{Reduced solve:} forward and backward substitution with the factorized $\mat{\mathcal S}$ (banded or block-tridiagonal) to obtain the edge unknowns $\state y$.
	\item \emph{Back-substitution:} recovery of the local unknowns $\state x = \mat{\mathcal A}^{-1}(\state f - \mat{\mathcal B}\state y)$, element by element.
	\item \emph{Scatter:} lift and scaling \eqref{eq:vertical_lift} of the result back to the DGSEM solution array.
\end{enumerate}

\paragraph{Parallelization strategy.}
The vertical columns are fully independent, so the natural unit of parallelism is the column; in distributed-memory runs, the mesh is partitioned such that no column is split across MPI ranks.
The gather and scatter kernels use one GPU thread per node.
The factorization of $\mat{\mathcal S}$ and the reduced solve are sequential recurrences in the vertical direction and therefore use one thread per column, which loops over the elements of the column; the same holds for the assembly of $\mat{\mathcal S}$, whose contributions overlap between neighboring elements.
The forward elimination and the back-substitution use one thread per element, since the element-local solves are independent.
The two strategies differ in the remaining details.
In the two-sided strategy, the rows of $\mat{\mathcal C}$ associated with element $e$ only involve local unknowns of element $e$, so each thread of the forward elimination writes only to its own entries of the reduced right-hand side; the fill kernel, in contrast, writes the direct interface coupling of $\mat{\mathcal D}$ into the rows of the neighboring element, and is therefore executed with one thread per column.
In the one-sided strategy, the local unknowns at node $1$ of element $e+1$ also contribute to the reduced right-hand side of element $e$; these contributions are written into a separate array, with a single writer per entry, and added in a subsequent kernel, which also allows one thread per element for the fill kernel.
Hence, no atomic operations are needed in either implementation.
All arrays owned by the solver are stored with the column index as the fastest-varying index, so that neighboring GPU threads, which process neighboring columns, access contiguous memory (coalesced access).
Small per-thread work arrays are kept in registers as statically sized arrays.
Finally, the Jacobian and its factorization are computed once per time step and reused in all stages of the Rosenbrock-W method.

\section{Numerical results}
\label{sec:results}
In this section we validate the scheme on a set of standard tests.
The two orderings of the vertical unknowns, one-sided vs two-sided, solve the same linear system and give the same solution to round-off.
Unless stated otherwise, the simulations are carried out using the Jacobian assuming the two-sided ordering. 
On the equiangular cubed sphere \cite{sadourny1972conservative,ronchi1996cubed} we measure the horizontal resolution by the mean node spacing along the equator,
\begin{equation}
    \Delta x \approx \frac{2 \pi \mathcal{R}}{4 K_h (\polydeg+1)},
\end{equation}
where $\mathcal{R}$ is the radius of the sphere ($\mathcal{R}_E$, or $\mathcal{R}_E/X$ on the small Earth), $K_h$ is the number of elements along each edge of a panel and $\polydeg$ is the polynomial degree.
We first verify the order of accuracy on the inertia-gravity waves in a two-dimensional channel. 
On the sphere we then run the baroclinic instability without orography, the baroclinic wave triggered by mountains, where we also check the conservation of the linear invariants with topography, and finally the gap flow and the vortex shedding of DCMIP-2025~\cite{andrews2026} on a reduced-radius sphere.
The plots are drawn by evaluating each field with the DG polynomial of each element on a uniform subgrid and all figures are produced with Makie.jl~\cite{danisch2021makie}.

\subsection{Inertia-gravity waves}
We verify the spatial order of accuracy of the DGSEM HEVI scheme on the linear inertia-gravity waves of Baldauf and Brdar~\cite{baldauf2013}, by means of an analytical solution of the linearized equations.
A channel of length $L = \SI{300}{km}$ and height $H = \SI{10}{km}$ is initialized with an isothermal atmosphere, $T_0 = \SI{250}{K}$, with a uniform background wind $u_0 = \SI{20}{m/s}$, in hydrostatic balance with the geopotential $\phi = g z$, $g = \SI{9.81}{m/s^2}$. The temperature is perturbed by
\begin{equation}
    T' = \Delta T \sin \left (\frac{ \pi z}{H} \right ) \exp \left (-\frac{(x - x_c)^2}{a^2}\right ),
\end{equation}
with $x_c = \SI{100}{km}$ and $a = \SI{5}{km}$, and the solution is advanced to $T_f = \SI{1800}{s}$. The amplitude $\Delta T = \SI{1.0e-3}{K}$ keeps the nonlinear terms small, to avoid saturation errors due to the predominance of nonlinear effects~\cite{blaise2016, baldauf2021horizontally,waruszewski2022entropy}.
The domain is discretized with $K_x \times K_z$ elements at a fixed ratio $\Delta x / \Delta z = 3$ with $\Delta x$ from $\SI{10}{km}$ to $\SI{1.25}{km}$, for polynomial degree $\polydeg \in \{ 2, 3, 4 \}$.
The time step is $\Delta t = \SI{0.1}{s}$ at $\Delta x = \SI{10}{km}$ and is scaled with $\Delta x$.
Figure~\ref{fig:igw} shows the $L_2$ errors of the horizontal and vertical velocity, of the temperature and of the pressure as functions of $\Delta x$.
The scheme converges with the expected rate $\polydeg+1$ for all variables.
At $\polydeg = 4$ the errors on the finest mesh flatten as expected, due to the nonlinear effects, as also observed and reported in~\cite{blaise2016,baldauf2021horizontally,waruszewski2022entropy, artiano2025structure, artiano2026entropy}.

\begin{figure}
    \centering
    \includegraphics[width = \textwidth]{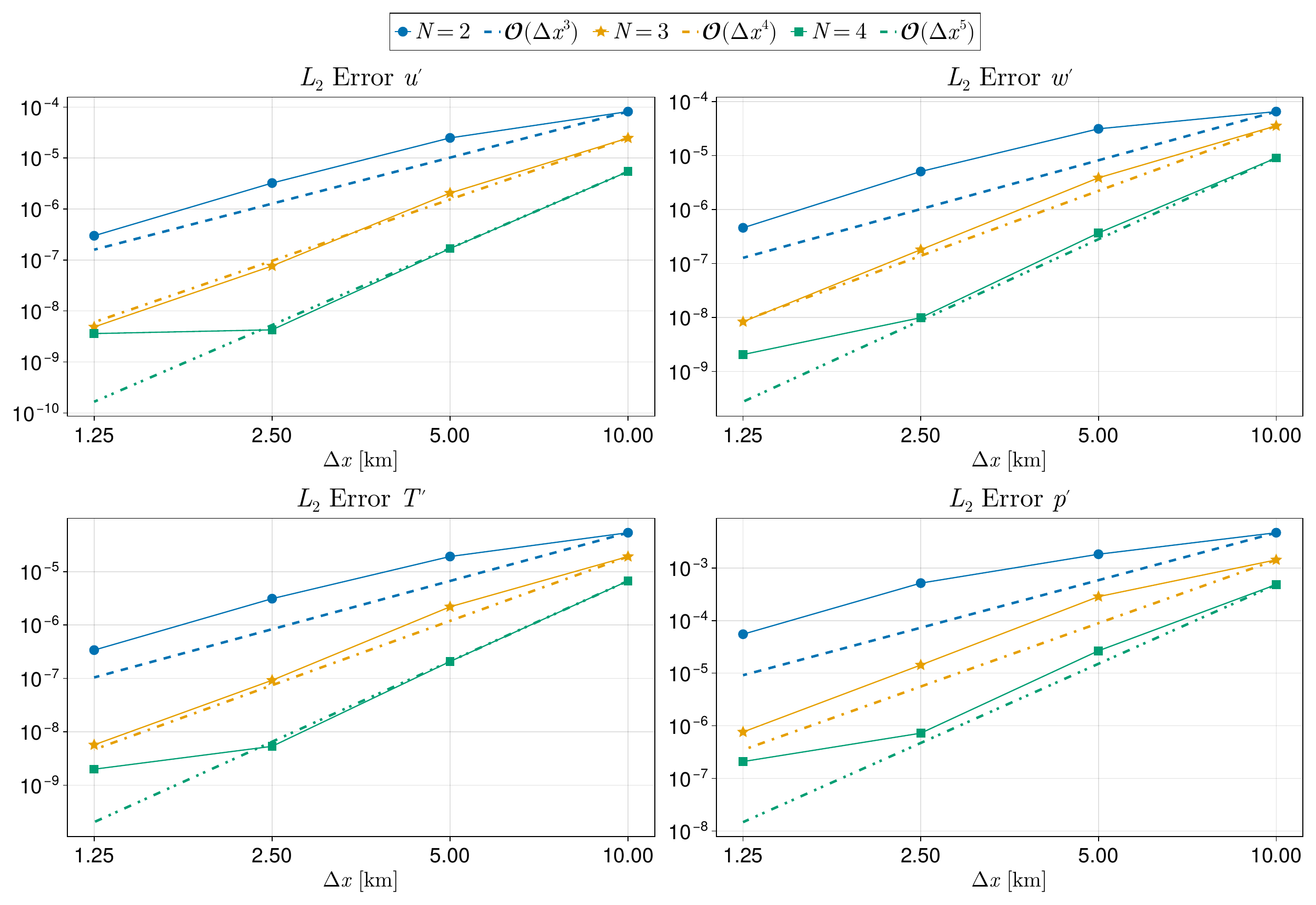}
    \caption{$L_2$ error of the horizontal velocity (top left), vertical velocity (top right), temperature (bottom left) and pressure (bottom right) as a function of the mesh size, for polynomial degrees $\polydeg \in \{2,3,4\}$.}
    \label{fig:igw}
\end{figure}

\subsection{Baroclinic instability}
\label{sec:results_baroclinic}
The first test on the sphere is the baroclinic instability of Ullrich et al.~\cite{ullrich2014proposed}, adopted in the 2016 Dynamical Core Model Intercomparison Project (DCMIP)~\cite{ullrich2017dcmip2016}.
The initial state is a balanced, zonally symmetric solution of the deep-atmosphere equations, to which a localized Gaussian perturbation of the horizontal wind is added in the northern midlatitudes; the perturbation grows into a baroclinic wave, which remains close to linear until about day~7 and then breaks.
The cubed-sphere mesh has $K_h = 20$ elements along each edge of the six panels, $K_v = 13$ elements between the surface and the model top at $\SI{30}{km}$, and $\polydeg = 4$, which gives a horizontal resolution of about $\SI{100}{km}$; the time step is $\Delta t = \SI{85}{s}$.
Figure~\ref{fig:baroclinic_p4} shows the surface pressure and the temperature and relative vorticity at $\SI{850}{hPa}$ on days 8 and 10.
The fields agree with those of the MCore and ENDGame models~\cite{ullrich2014proposed}, of MPAS~\cite{skamarock2021fully} and of the entropy-stable DG schemes of~\cite{waruszewski2022entropy, artiano2026entropy}.
The relative vorticity shows small-scale oscillations: the scheme has no explicit dissipation beyond that of the interface fluxes, and the vorticity is more sensitive to it than the pressure and the temperature~\cite{waruszewski2022entropy,skamarock2021fully}.
\begin{figure}
    \centering
    \includegraphics[width = \textwidth]{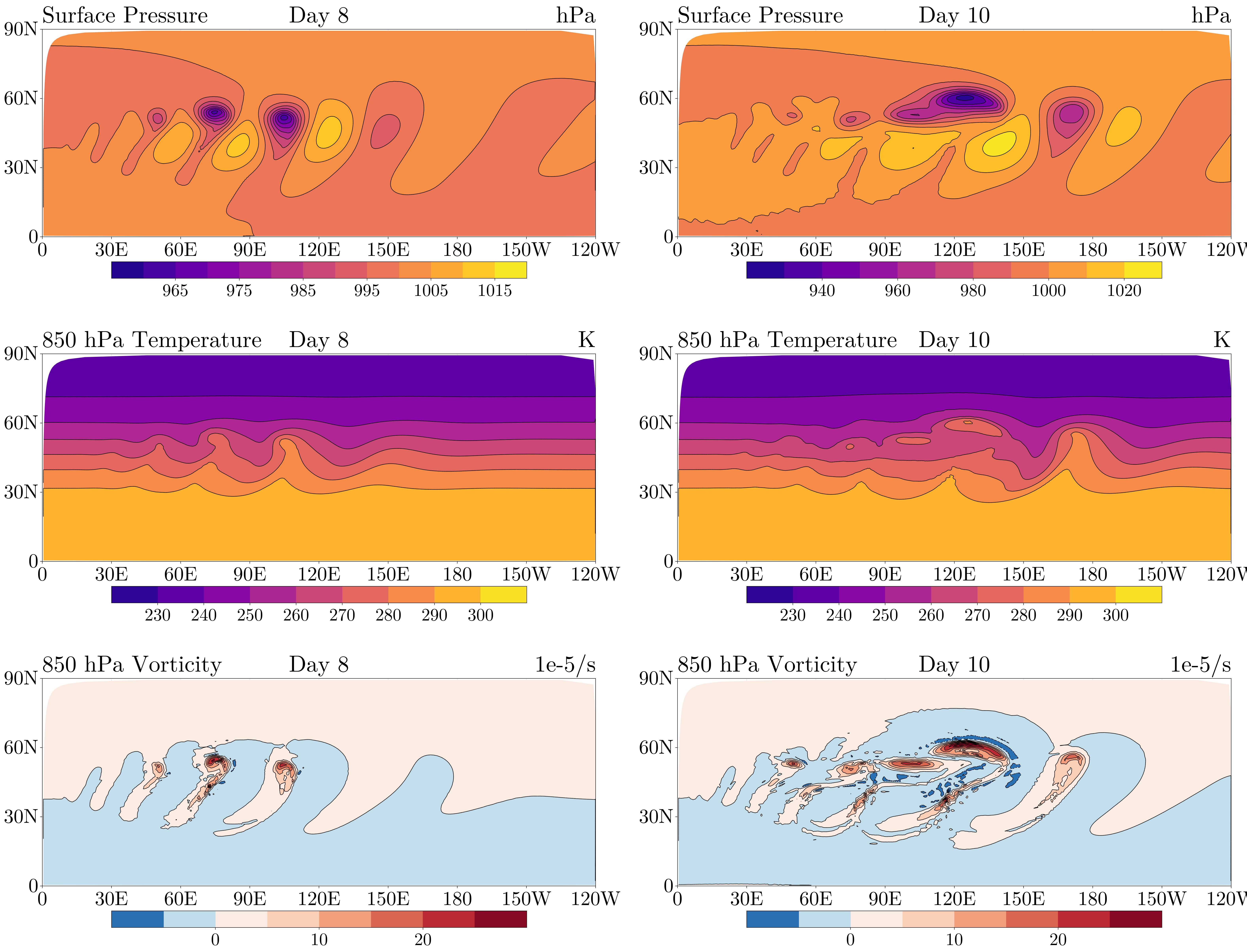}
    \caption{Surface pressure (top), \SI{850}{hPa} temperature (middle) and \SI{850}{hPa} relative vorticity (bottom) at day~8 (left) and day~10 (right) for the baroclinic instability test case, with $K_h = 20$, $K_v = 13$, and polynomial degree $\polydeg = 4$.}
    \label{fig:baroclinic_p4}
\end{figure}

\subsubsection{Performance of the dry Euler dynamical core}
\label{sec:performance}

As described in Section~\ref{sec:implementation}, our GPU implementation is vendor agnostic through KernelAbstractions.jl \cite{Churavy_KernelAbstractions_jl}, so the same code runs on NVIDIA and AMD hardware.
We measure the performance of the HEVI scheme in simulated years per day (SYPD) for the baroclinic instability of Section~\ref{sec:results_baroclinic}, using the entropy-stable and well-balanced split-form DGSEM with $\polydeg = 4$ and $K_v = 13$ vertical elements.
All runs use double precision on a single GPU, either an NVIDIA GH200 or an AMD MI210.
The SYPD quantity is assessed after running the baroclinic instability for $10$ days, excluding initialization and I/O.
The horizontal resolution at the equator ranges from $\SI{100}{km}$ to $\SI{25}{km}$, i.e., from $K_h = 20$ to $K_h = 80$ elements per panel edge.
The time step is $\Delta t = \SI{85}{s}$ at $\SI{100}{km}$ and is scaled proportionally to $\Delta x$, so that the horizontal Courant number is the same on all meshes.

Figure~\ref{fig:sypd} shows the SYPD for both GPUsm both condensation strategies and different polynomial degrees.
For the same resolutions, the performance of the method is very similar for $\polydeg = 3$ and $\polydeg = 4$, while it drops slightly for $\polydeg=5$.
On the GH200, the scheme reaches $\SI{18.8}{SYPD}$ at $\SI{100}{km}$, $\SI{2.9}{SYPD}$ at $\SI{50}{km}$, and $\SI{0.35}{SYPD}$ at $\SI{25}{km}$ with $\polydeg=4$.
The MI210 is between $2.9$ and $3.7$ times slower than the GH200, and the $\SI{25}{km}$ mesh does not fit into its $\SI{64}{GB}$ of memory.
The two condensation strategies perform similarly: on the GH200, the two-sided strategy is at most $\SI{4}{\percent}$ faster than the one-sided strategy, and on the MI210 at most $\SI{7}{\percent}$.
Since the number of vertical columns grows as $\Delta x^{-2}$ and the number of time steps as $\Delta x^{-1}$, the SYPD are expected to scale as $\Delta x^{3}$.
The measured SYPD follow this scaling closely on the finer meshes, while the decrease is slightly weaker on the coarsest meshes, presumably because these meshes do not fully occupy the GPU.

\begin{figure}
    \centering
    \includegraphics[width=\linewidth, trim=0 0 0 10, clip]{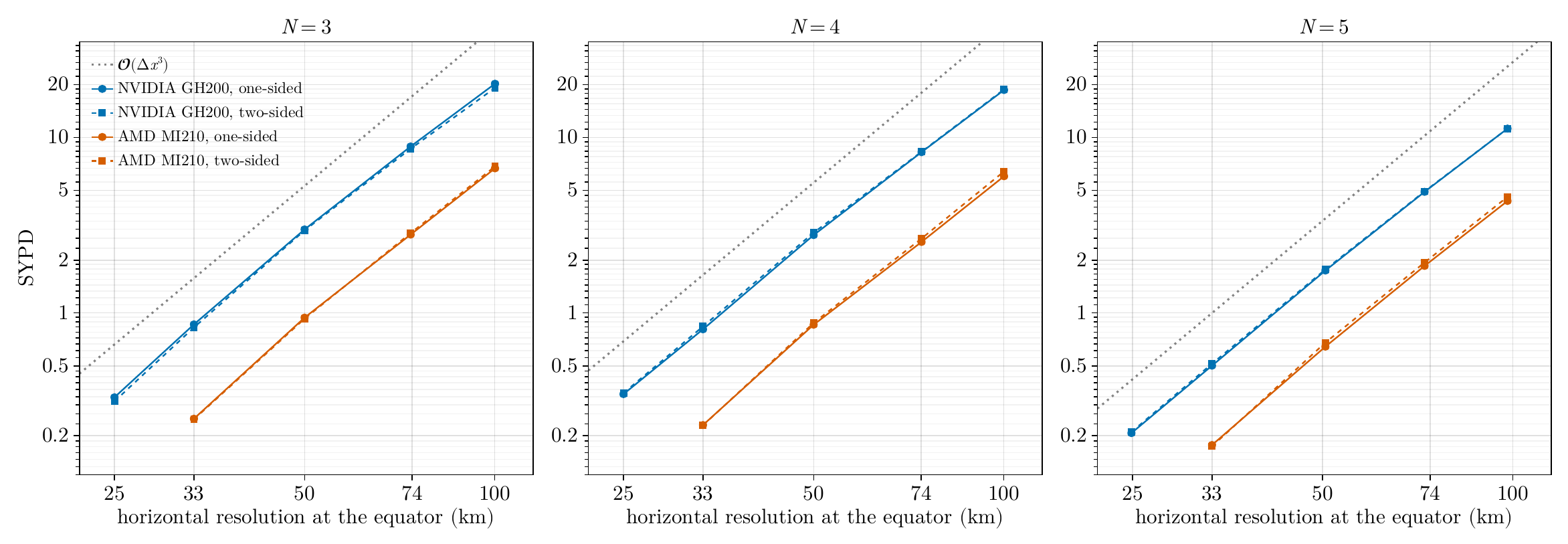}
    \caption{Simulated years per day (SYPD) for the baroclinic instability with the dry compressible Euler equations on the sphere, as a function of the horizontal resolution at the equator, for the one-sided and two-sided condensation strategies on a single NVIDIA GH200 and a single AMD MI210 GPU.
    The discretization uses $\polydeg \in \{3,4,5\}$ and $K_v = 13$, and the time step is $\Delta t = \SI{85}{s}$ at $\SI{100}{km}$ and scaled with $\Delta x$.
    The dotted line indicates the expected $\mathcal O(\Delta x^3)$ scaling.}
    \label{fig:sypd}
\end{figure}

\subsection{Mountain-induced baroclinic wave}
The mountain-induced baroclinic wave of \citet{hughes2023mountain} uses the same balanced background
state as the baroclinic instability of Section~\ref{sec:results_baroclinic}. Here,  the baroclinic waves are triggered by two mountain ridges in the northern
midlatitudes.
We run the dry variant of the test, in the deep-atmosphere form of the background state
\cite{ullrich2014proposed}, as recommended in~\cite{hughes2023mountain} for deep-atmosphere models.
The orography consists of two identical ridges centered at $45^\circ$N,
\begin{equation}
    z_s(\lambda,\varphi) = h_0 \sum_{n=1}^{2} \exp\left[-\left(\frac{\varphi-\varphi_n}{d}\right)^{6}
    - \left(\frac{l_n(\lambda)}{c}\right)^{2}\right],
    \qquad d = \frac{\Delta\varphi}{2}\left(-\ln 0.1\right)^{-1/6}, \quad
    c = \frac{\Delta\lambda}{2}\left(-\ln 0.1\right)^{-1/2},
\end{equation}
where $l_n(\lambda)$ is the longitudinal distance from the center $\lambda_n$ of the $n$-th ridge,
$h_0 = \SI{2}{km}$, $\varphi_1 = \varphi_2 = 45^\circ$, $\lambda_1 = 72^\circ$E, $\lambda_2 = 140^\circ$E,
and $\Delta\varphi = 40^\circ$ and $\Delta\lambda = 7^\circ$ are the latitudinal and longitudinal widths
at which the height drops to $10\%$ of its peak.
The initial vertical velocity is the one induced by the zonal wind along the sloping coordinate surfaces,
Eq.~(5) in~\cite{hughes2023mountain}.
The mesh has $K_h = 50$ elements per panel edge, $K_v = 13$ uniform layers with the Gal-Chen
terrain-following coordinate up to the model top at $\SI{30}{km}$, and $\polydeg = 3$, i.e., a horizontal
resolution of about $\SI{50}{km}$, as the ne60 configuration of~\cite{hughes2023mountain};
the time step is $\Delta t = \SI{40}{s}$.
Figure~\ref{fig:hj23_maps} shows the sea-level pressure and the temperature at $\SI{850}{hPa}$ on days 3 to 6.
Each ridge triggers a baroclinic wave in its lee, and the two low-pressure systems are almost identical
at day 4.
At day 5, the wave triggered by the first ridge reaches the second one, and by day 6 the orographic forcing
has visibly changed its structure with respect to the wave triggered by the second ridge, as described
in~\cite{hughes2023mountain}.
The longitude--height section at $45^\circ$N at day 4 (Figure~\ref{fig:hj23_section}) shows the warm
anomaly of the developing wave, with its maximum at the surface near $122^\circ$E, and the vertically
propagating inertia--gravity waves in the lee of the second ridge, with the same structure and amplitude
as the dry simulation of~\cite[Fig.~7]{hughes2023mountain}.
\begin{figure}
    \centering
    \includegraphics[width=1.05\textwidth, keepaspectratio]{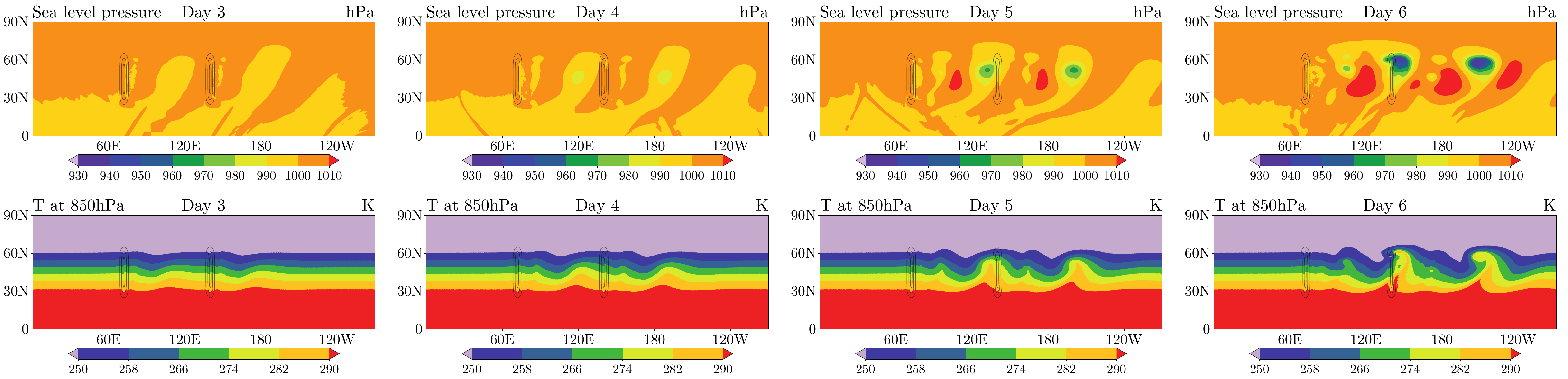}
    \caption{Mountain-induced baroclinic wave: sea-level pressure (top) and temperature at $\SI{850}{hPa}$
    (bottom) on days 3, 4, 5 and 6 (left to right), with $K_h = 50$, $K_v = 13$ and $\polydeg = 3$.
    The contours mark the ridges.}
    \label{fig:hj23_maps}
\end{figure}

\begin{figure}
    \centering
    \includegraphics[width=\textwidth]{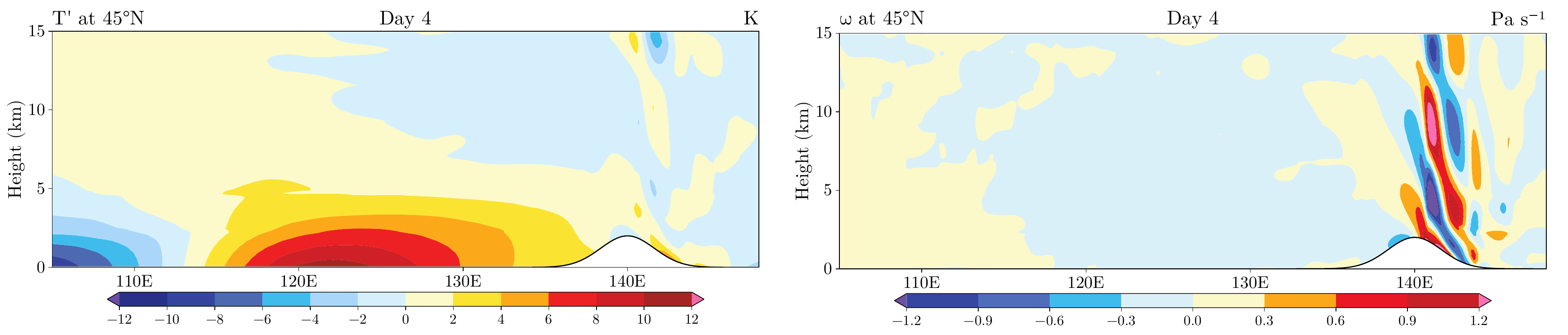}
    \caption{Mountain-induced baroclinic wave: temperature perturbation (left) and vertical pressure
    velocity $\omega$ (right) at $45^\circ$N on day 4, with $K_h = 50$, $K_v = 13$ and $\polydeg = 3$.}
    \label{fig:hj23_section}
\end{figure}
Finally, in Figure~\ref{fig:hj23_conservation}, the total mass and the total potential-temperature density on the sphere with orography are conserved to
machine precision as predicted
by Corollary~\ref{cor:hevi_invariants}.
\begin{figure}
    \centering
    \includegraphics[width=0.6\textwidth]{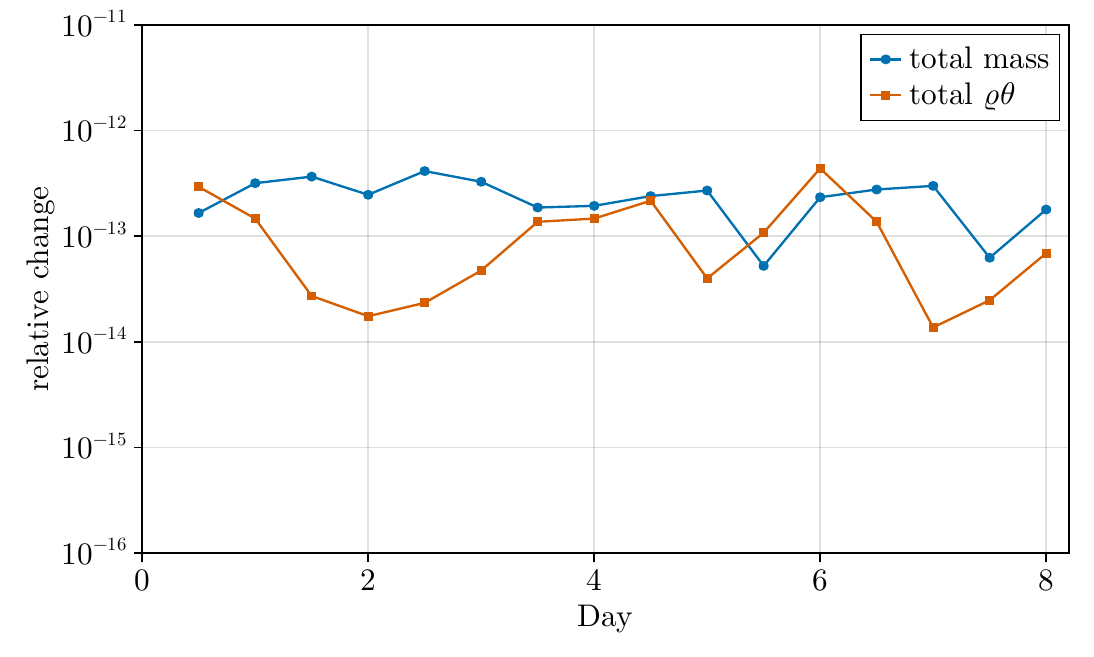}
    \caption{Mountain-induced baroclinic wave: relative change of the total mass and of the total
    potential-temperature density with respect to the initial state, with $K_h = 50$, $K_v = 13$ and
    $\polydeg = 3$.}
    \label{fig:hj23_conservation}
\end{figure}

\subsection{Gap flow}
\label{sec:gapflow}
The gap flow is one of the mountain-generated mesoscale tests of DCMIP-2025 reported in~\cite{andrews2026}.
The test case is run on a sphere of reduced radius $\mathcal{R} = \mathcal{R}_E / X$ with $\mathcal{R}_E = \SI{6371.229}{km}$ and $X = 20$, rotating with $\Omega = X\,\Omega_E$, so that the orography is steep and mesoscale dynamics appear at an affordable resolution.
The initial state is an isothermal atmosphere in hydrostatic balance, with a zonal wind in solid-body rotation, with balanced surface pressure~\cite{andrews2026},
\begin{equation}
\begin{aligned}
    u_{\lambda} &= u_0 \cos \varphi \\
    u_{\phi} &= 0, \\
    T &= T_0, \\
    p &= p_s \exp \left(-\frac{g (z - z_s)}{R_d T_0}\right),\\
    p_s &= p_{0} \exp\left[-\frac{a \mathcal{N}^2 u_0}{2 g^2 \kappa}\left(\frac{u_0}{a} + 2\Omega\right)\left(\sin^2\varphi - 1\right) - \frac{\mathcal{N}^2}{g \kappa}\, z_s\right],
\end{aligned}
\end{equation}
where $u_{\lambda}$ and $u_{\phi}$ are the zonal and meridional velocity components, $u_0 = \SI{10}{m/s}$, $T_0 = \SI{288}{K}$, $p_0 = \SI{e5}{Pa}$, $\kappa = R_d / c_p$ and the constant Brunt--V\"ais\"al\"a frequency $\mathcal{N} = \sqrt{g^2 / (c_p T_0)} \approx \SI{0.0182}{s^{-1}}$.
We initialize the vertical velocity to zero, since the nonzero vertical velocity that follows the coordinate lines, as in~\cite{andrews2026} for nonhydrostatic models, does not have noticeable influence.
The orography $z_s = f\,q$ is a meridional mountain chain cut by a gap at its center,
\begin{equation}
    f(\lambda, \varphi) = h_0 \exp\left(-\left(\frac{\lambda - \lambda_c}{d_1}\right)^{b} - \left(\frac{\varphi - \varphi_c}{d_2}\right)^{b}\right),
    \qquad
    q(\lambda, \varphi) = 1 - \exp\left(-\left(\frac{\varphi - \varphi_c}{d_3}\right)^{b}\right),
\end{equation}
with $h_0 = \SI{1500}{m}$, $(\lambda_c, \varphi_c) = (\pi, 0)$, $b = 10$ and $d_i = \frac{x_i}{2\mathcal{R}} (\ln 10)^{-1/b}$, where $x_1 = \SI{40}{km}$, $x_2 = \SI{300}{km}$ and $x_3 = \SI{50}{km}$ are the longitudinal extent of the chain, its latitudinal extent and the width of the gap.
The model top is at $\SI{20}{km}$, and a Rayleigh damping layer above $\SI{10}{km}$ relaxes the velocity toward its initial value to absorb the vertically propagating waves.
The inverse Froude number places the flow in the nonlinear flow-around regime: the wind accelerates through the gap into a jet along the equator, decelerates and even reverses on both sides of the chain, and a pair of lee vortices detaches from the orography.
The mesh has $K_h = 56$ elements per panel edge, $K_v = 19$ layers  and $\polydeg = 3$, i.e., a horizontal resolution of about $\SI{2.25}{km}$ as in the reference simulations; the time step is $\Delta t = \SI{2.4}{s}$.
Figure~\ref{fig:gapflow} shows the zonal wind perturbation $u' = (u_\lambda - u_0\cos\varphi)/(u_0\cos\varphi)$ and the temperature perturbation $T' = T - T_0$ at height $z = \SI{300}{m}$, at days 0.125, 0.25, 0.375 and 0.5.
The accelerated flow through the gap, the regions of reversed flow on both sides of the chain and the pair of lee vortices in the temperature field agree with the four dynamical cores of~\cite{andrews2026}; downstream of the gap, the temperature field shows more small-scale structure than CAM-SE and CAM-MPAS, as in CAM-FV3 and GungHo.
The grid oscillations are due to the very minimal dissipation provided by our method.

\begin{figure}
    \centering
\includegraphics[width=\textwidth, keepaspectratio]{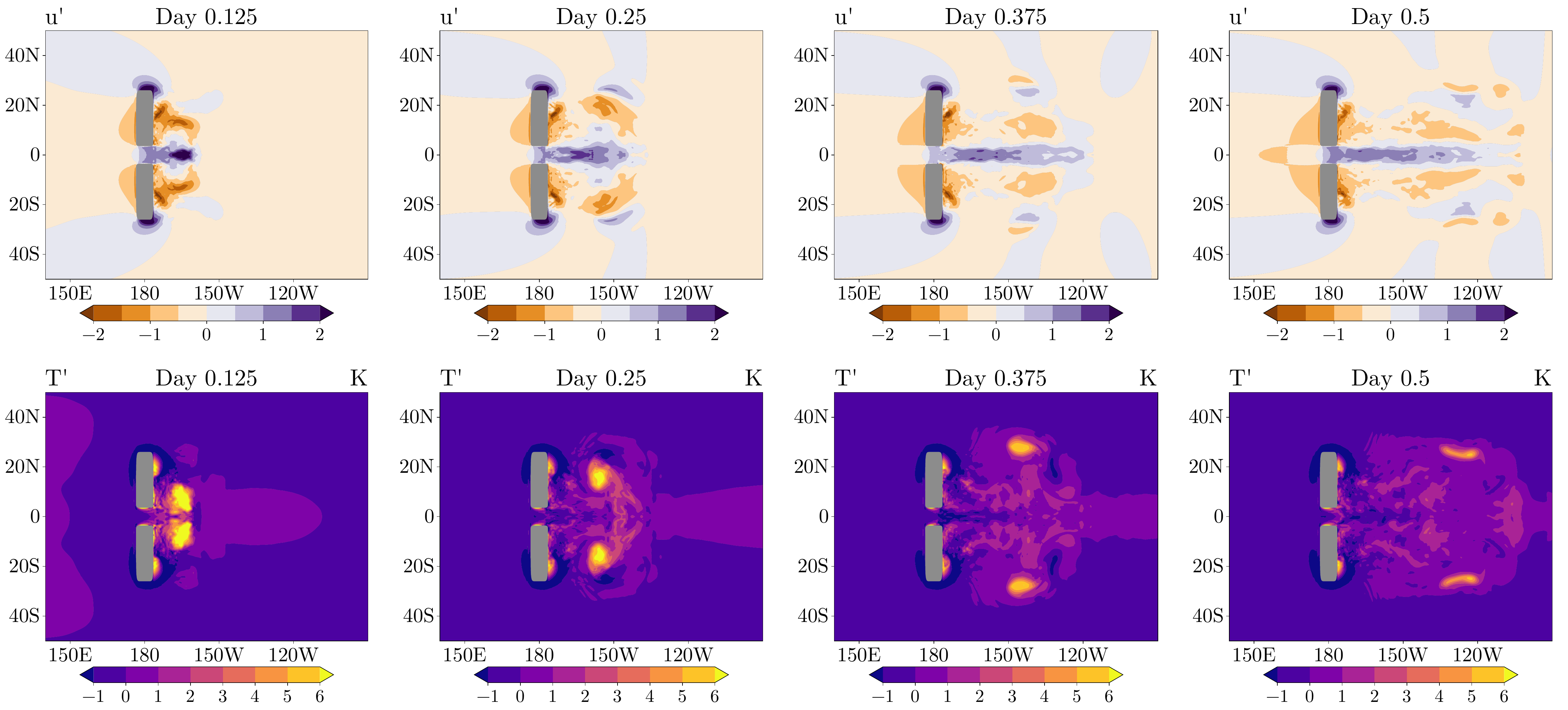}
\caption{Gap flow at $z = \SI{300}{m}$ on days 0.125, 0.25, 0.375 and 0.5 (left to right): zonal wind perturbation $u' = (u_\lambda - u_0\cos\varphi)/(u_0\cos\varphi)$ (top) and temperature perturbation $T' = T - T_0$ in K (bottom) at $\SI{2.25}{km}$ horizontal resolution scale.}
    \label{fig:gapflow}
\end{figure}

\subsection{Vortex shedding}
\label{sec:vortex_shedding}
The last test case is the vortex shedding of the same DCMIP-2025 suite reported in~\cite{andrews2026}.
The small Earth, the initial state, the model top, and the damping layer are those of the gap flow in Section~\ref{sec:gapflow}.
The difference lies in the topography, which is described here by an isolated Gaussian mountain,
\begin{equation}
    z_s(\lambda, \varphi) = h_0 \exp\left(-\left(\frac{r}{d}\right)^2\right),
    \qquad
    r = \mathcal{R} \arccos\left(\sin\varphi \sin\varphi_c + \cos\varphi \cos\varphi_c \cos(\lambda - \lambda_c)\right),
\end{equation}
where $r$ is the great-circle distance from the center $(\lambda_c, \varphi_c) = (\pi, \pi/9)$, $h_0 = \SI{2000}{m}$ and $d = \SI{12.5}{km}$.
The mesh has $K_h = 56$ elements per panel edge, $K_v = 19$ uniform layers and $\polydeg = 3$, as for the gap flow, i.e., a horizontal resolution scale of $\SI{2.25}{km}$, with a time step $\Delta t = \SI{2.4}{s}$. The simulation runs for a total duration of 1 day.
Figure~\ref{fig:vortex} shows the zonal wind perturbation and the relative vorticity at $z = \SI{300}{m}$
on days 0.25 to 1.
The structure of the wake, with alternating vortices of opposite sign and a recirculation zone behind the mountain, agrees with the four dynamical cores of~\cite{andrews2026}.
As in the gap flow, the vorticity shows small-scale structure along the shear layers, which is more pronounced than in CAM-SE and CAM-MPAS.
This is due to the low intrinsic dissipation of the scheme, since the dissipation is only introduced at the element interfaces through the entropy-stable surface fluxes.
\begin{figure}
    \centering
\includegraphics[width = 1.0\textwidth, keepaspectratio]{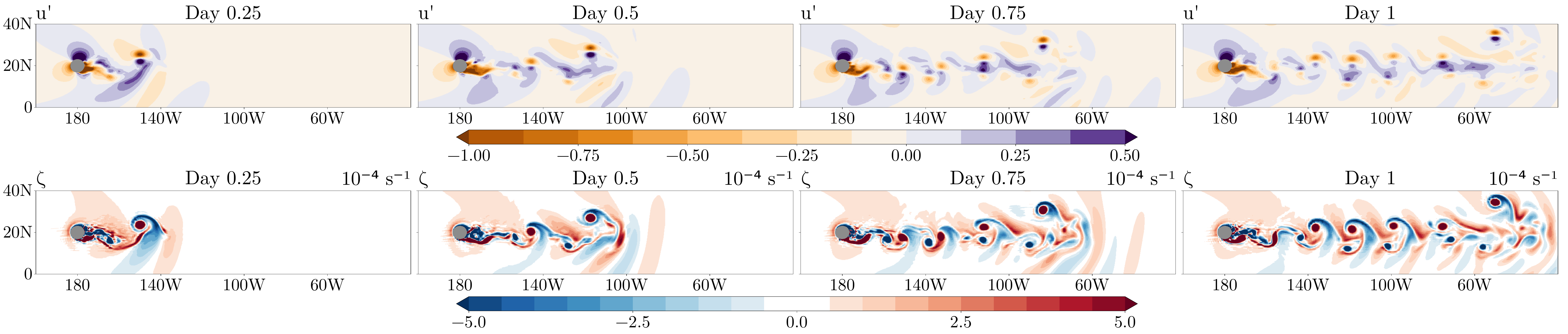}
    \caption{Vortex shedding at $z = \SI{300}{m}$ on days 0.25, 0.5, 0.75 and 1 (left to right), with
rotation: zonal wind perturbation $u' = (u_\lambda - u_0\cos\varphi)/(u_0\cos\varphi)$ (top) and relative vorticity
$\zeta$ in $\SI{e-4}{s^{-1}}$ (bottom) at $\SI{2.25}{km}$ horizontal resolution scale.}
    \label{fig:vortex}
\end{figure}

\section{Conclusions} \label{sec:Conclusions}

We have presented nested Schur-complement solvers for the vertical linear systems that arise in the horizontally explicit, vertically implicit (HEVI) time integration of a split-form LGL-DGSEM for the compressible Euler equations with gravity.
Exploiting the freedom of Rosenbrock-W methods in the choice of the Jacobian, we derived an approximate Jacobian from the standard strong-form DGSEM that retains only the vertically propagating acoustic waves and gravity, decouples into independent column systems, is free of geometric (Christoffel) terms on curvilinear meshes with orography, and preserves the conservation of total mass and potential-temperature density.
The solvers combine a local Schur complement, which analytically eliminates the density and potential-temperature density within each element, with a global static condensation onto element-edge unknowns.
Of the two proposed condensation strategies, the \textit{two-sided} one leads to a banded reduced system of size $4N_z$, and the \textit{one-sided} one to a smaller block-tridiagonal system of size $3N_z$ that is solved with a specialized block Thomas algorithm.
Both strategies are pivot-free and implemented for CPUs and GPUs in a vendor-agnostic way in Trixi.jl and TrixiAtmo.jl, and they are combined with a new three-stage, second-order, L-stable Rosenbrock-W method that reduces to the SSP Runge--Kutta method of \citet{shu1988efficient} for the explicitly treated components.

We have tested the accuracy and performance of the scheme on standard test cases from the literature: the channel-confined inertia-gravity waves, the baroclinic wave, a mountain-induced baroclinic instability, and the DCMIP-2025 gap-flow and vortex-shedding test cases.

\section*{Acknowledgments}

Andrés M. Rueda-Ramírez gratefully acknowledges funding from the Spanish Ministry of Science, Innovation, and Universities through the ``Beatriz Galindo'' grant (BG23-00062).
This project has received funding from the European Research Council (ERC) under the European Union's Horizon Europe research and innovation program (grant agreement No. 101167322 - TRANSDIFFUSE).
Gregor Gassner and Andrés M. Rueda-Ramírez acknowledge funding through the German Federal Ministry for Education and Research (BMBF) project ``ICON-DG'' (01LK2315B) of the ``WarmWorld Smarter'' program.

Marco Artiano and Hendrik Ranocha were supported by the Deutsche Forschungsgemeinschaft (DFG, German Research Foundation) through project number 528753982 and within the DFG priority program SPP~2410 through project number 526031774.
This work was also supported by the Max Planck Graduate Center with the Johannes Gutenberg University of Mainz (MPGC) and the Mainz Institute of Multiscale Modeling (M\textsuperscript{3}ODEL).

\section*{Tool disclosure}

Claude Opus 5 was used to assist software development.

\printcredits

\bibliographystyle{model1-num-names-doi}

\bibliography{Biblio.bib}

\section*{Appendices}
\appendix

\section{Construction of an L-stable Rosenbrock-W method} \label{app:tableau}

In what follows, we derive the tableau for a Rosenbrock-W method \eqref{eq:rosenbrockW},
\begin{equation}\label{eq:rosenbrockW2}
	\left( \frac{1}{\hat \gamma \Delta t} \mat{I} - \mat{J} \right) \state{K}_i = \state{R}\Big(\state{U}^n + \textstyle\sum_{j<i} \tilde{a}_{ij} \state{K}_j\Big) + \frac{1}{\Delta t}\textstyle\sum_{j<i} c_{ij} \state K_j,
	\qquad i = 1, \dots, s,
\end{equation}
which is L-stable, has a constant $\hat \gamma$ for all stages, and reduces to a strong stability-preserving Runge--Kutta scheme in the purely explicit setting.

We use the row sums $a_i = \sum_{j<i}a_{ij}$ and $\gamma_i = \sum_{j\le i}\gamma_{ij}$, where the latter includes the diagonal entry of $\mat\Gamma$.
We derive a three-stage method whose explicit part, i.e., the method obtained for $\mat J = \mat 0$, is the three-stage, third-order SSP Runge--Kutta method of \citet{shu1988efficient},
\begin{equation}\label{eq:ssprk3}
	a_{21} = 1, \quad a_{31} = a_{32} = \tfrac14, \quad \state b = \big(\tfrac16, \tfrac16, \tfrac23\big)^T.
\end{equation}
For $\mat J = \mat 0$, \eqref{eq:rosenbrockW}--\eqref{eq:Rosenbrock_update} reduces exactly to this method, irrespective of $\mat\Gamma$.
In the HEVI setting, the same holds for all components on which $\mat J$ vanishes, e.g., passive tracers, which are therefore advanced with an SSP method that preserves the stability properties of the spatial discretization under a time-step restriction \cite{shu1988efficient,gottlieb2001strong}.

Since \eqref{eq:ssprk3} satisfies all order conditions of explicit Runge--Kutta methods up to order three, only the order conditions that involve $\mat J$ restrict the four remaining parameters $\hat\gamma$, $\gamma_{21}$, $\gamma_{31}$, and $\gamma_{32}$.
For a W-method, i.e., for an arbitrary $\mat J$, these conditions read \cite{steihaug1979attempt,hairer1996solving}
\begin{align}
	&\text{order 2:} && \textstyle\sum_{i} b_i\gamma_i = 0, \label{eq:W_order2}\\
	&\text{order 3:} && \textstyle\sum_{i,j} b_ia_{ij}\gamma_j = 0, \qquad
	\textstyle\sum_{i,j} b_i\gamma_{ij}a_j = 0, \qquad
	\textstyle\sum_{i,j} b_i\gamma_{ij}\gamma_j = 0, \label{eq:W_order3}
\end{align}
where the three conditions of \eqref{eq:W_order3} correspond to the elementary differentials $\state R'\mat J\state R$, $\mat J\state R'\state R$, and $\mat J^2\state R$, respectively.
With \eqref{eq:ssprk3}, the first two conditions of \eqref{eq:W_order3} yield $\gamma_{21} = -3\hat\gamma$ and $\gamma_{32} = -\tfrac34\hat\gamma$, \eqref{eq:W_order2} then yields $\gamma_{31} = 0$, and the last condition of \eqref{eq:W_order3} reduces to $\hat\gamma^2/2 = 0$.
Hence, there is no third-order Rosenbrock-W method with a nonzero constant diagonal whose explicit part is \eqref{eq:ssprk3}, and we impose only the second-order condition \eqref{eq:W_order2}, which, written out, reads
\begin{equation}\label{eq:2ndorder_rosw}
    6\hat\gamma + \gamma_{21} + 4\left(\gamma_{31} + \gamma_{32}\right) = 0.
\end{equation}

The remaining free parameters are chosen to obtain an L-stable method.
Following \cite{steihaug1979attempt,hairer1996solving,rangangermann2005}, we study the linear stability of the W-method for the scalar test equation $\dot u = \lambda u$ with the exact Jacobian $\mat J = \lambda$, which yields the stability function
\begin{equation}
	R(z) = 1 + z\,\state b^T\big(\mat I - z(\mat a + \mat\Gamma)\big)^{-1}\state 1, \qquad z = \lambda\Delta t.
\end{equation}
Using \eqref{eq:2ndorder_rosw}, its limit at infinity is
\begin{equation}
	R(\infty) = -\frac{(1+\gamma_{21})(1+4\gamma_{32}) - 3\hat\gamma\,(2\hat\gamma^2 - 4\hat\gamma + 1)}{6\hat\gamma^3}.
\end{equation}
Hence, $R(\infty) = 0$ if and only if
\begin{equation}\label{eq:Lstability_rosw}
    (1 + \gamma_{21}) (1 + 4 \gamma_{32}) = 3 \hat{\gamma} (2\hat{\gamma}^2 -4 \hat{\gamma}+1).
\end{equation}
Since L-stability requires A-stability in addition to $R(\infty) = 0$, we first choose the remaining parameters such that \eqref{eq:Lstability_rosw} holds, and then verify A-stability for the resulting coefficients.
We satisfy \eqref{eq:Lstability_rosw} by requiring that both of its sides vanish.
The right-hand side vanishes for $\hat\gamma = 1 \pm {1}/{\sqrt2}$, and we choose $\hat{\gamma} = 1 - {1}/{\sqrt{2}}$.
Instead of setting one of the factors on the left-hand side to zero directly, we fix $\gamma_{21}$ by imposing the first third-order condition of \eqref{eq:W_order3}, $\sum_{i,j} b_ia_{ij}\gamma_j = 0$, which cancels the leading error term associated with $\state R'\mat J\state R$ and yields $\gamma_{21} = -3\hat\gamma = {3}/{\sqrt2} - 3$.
Since $1 + \gamma_{21} \neq 0$, \eqref{eq:Lstability_rosw} then requires $\gamma_{32} = -\tfrac14$, and \eqref{eq:2ndorder_rosw} gives 
$$\gamma_{31} = \tfrac{3\sqrt2 - 4}{8}.
$$
This particular choice of $\hat\gamma$ and $\gamma_{21}$ performed best in our experiments, resulting in a slightly larger stability region.

It remains now to show that the method is A-stable.
Since $\mat a + \mat\Gamma$ is lower triangular with constant diagonal $\hat\gamma$, the stability function coincides with that of implicit RK method with coefficient matrix $\mat a + \mat\Gamma$ and weights $\state b$.
Hence, it is a rational function $R(z) = P(z)/Q(z)$, whose denominator is $Q(z) = \det\big(\mat I - z(\mat a + \mat\Gamma)\big) = (1-\hat\gamma z)^3$ and whose numerator is $P(z) = \det \big(\mat I - z(\mat a + \mat\Gamma) + z\, \state 1 \state b^T\big)$\cite[Sec-IV.3]{hairer1996solving}.
For the chosen coefficients, we obtain
\begin{equation}\label{eq:R_closed}
	R(z) = \frac{P(z)}{Q(z)} = \frac{1 + (1 - 3 \hat{\gamma} )\, z\,(1 - z)}{(1-\hat\gamma z)^3}.
\end{equation}
Following \cite[Sec.~IV.3]{hairer1996solving}, a rational function $R = P/Q$ is A-stable if and only if
\begin{equation}
        E(y) = |Q(\mathrm i y)|^2 - |P(\mathrm i y)|^2 \ge 0,\quad \forall y \in \mathbb{R},
\end{equation}
 and $R(z)$ is analytic for  $\operatorname{Re}(z) < 0$.
For \eqref{eq:R_closed}, the E-polynomial reads
\begin{equation}
	E(y) = \hat\gamma^4 y^4 + \hat\gamma^6 y^6 \geq 0, \qquad \forall y \in \mathbb R,
\end{equation}
Moreover, the only pole of $R$ is $z = 1/\hat\gamma > 0$, so $R$ is analytic in the open left half-plane. 
Hence, the method is A-stable and, since $R(\infty) = 0$ by construction, L-stable.
The resulting coefficients of both forms are listed in Table~\ref{tab:ssp_tableau}.

\section{Preservation of linear invariants}
\label{app:linear_invariants}

We say that a vector $\state W$ defines a \emph{linear invariant} of the semi-discrete system $\dot{\state U} = \state R(\state U)$ if
\begin{equation}\label{eq:linear_invariant}
	\state W^T \state R(\state U) = 0 \qquad \text{for all } \state U .
\end{equation}
Then $\frac{\mathrm d}{\mathrm dt}\,\state W^T\state U = 0$, i.e., the quantity $\state W^T\state U$ is conserved by the semi-discretization.
For a conservative DGSEM discretization with periodic or impermeable boundaries, the total mass and the total potential-temperature density are linear invariants, where $\state W$ contains the quadrature weights multiplied by the mapping Jacobian at the corresponding degrees of freedom, and zeros elsewhere.

\begin{lemma}\label{lem:rosenbrock_invariants}
Let $\state W$ define a linear invariant of $\dot{\state U} = \state R(\state U)$, and let the approximate Jacobian satisfy
\begin{equation}\label{eq:jacobian_invariant}
	\state W^T \mat J = \mat 0 .
\end{equation}
Then the Rosenbrock-W method \eqref{eq:rosenbrockW}--\eqref{eq:Rosenbrock_update} preserves the linear invariant, i.e., $\state W^T\state U^{n+1} = \state W^T\state U^n$, provided that the stage equations are solved exactly.
\end{lemma}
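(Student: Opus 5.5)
The plan is to show by induction over the stages that $\state W^T\state K_i = 0$ for every $i = 1,\dots,s$. The conclusion then follows at once from the update \eqref{eq:Rosenbrock_update}:
\begin{equation*}
\state W^T\state U^{n+1} = \state W^T\state U^n + \sum_{i=1}^s m_i\, \state W^T\state K_i = \state W^T\state U^n .
\end{equation*}
The argument only uses the transformed form \eqref{eq:rosenbrockW}, in which $\mat J$ enters solely through the system matrix. This is convenient, since no Jacobian--vector products have to be tracked.

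For the inductive step, fix a stage $i$ and assume $\state W^T\state K_j = 0$ for all $j<i$; for $i=1$ this hypothesis is empty. Since the stage equation is solved exactly, we may multiply \eqref{eq:rosenbrockW} from the left by $\state W^T$. On the left-hand side, assumption \eqref{eq:jacobian_invariant} removes the Jacobian term, leaving $\frac{1}{\hat\gamma\Delta t}\,\state W^T\state K_i$. On the right-hand side there are two contributions. The first, $\state W^T\state R(\cdot)$, vanishes by \eqref{eq:linear_invariant}, because that identity holds for \emph{every} argument, in particular for the stage argument $\state U^n + \sum_{j<i}\tilde a_{ij}\state K_j$. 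The second, $\frac{1}{\Delta t}\sum_{j<i}c_{ij}\,\state W^T\state K_j$, vanishes by the induction hypothesis. Since $\hat\gamma\Delta t \neq 0$, we conclude $\state W^T\state K_i = 0$.

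There is no real obstacle here. The only points that need care are that \eqref{eq:linear_invariant} must hold for all states, not just along the solution, and that exact solvability requires $\tfrac{1}{\hat\gamma\Delta t}\mat I - \mat J$ to be invertible, or at least that the computed $\state K_i$ satisfies the stage equation exactly. I would also note that the same argument works with $\mat J$ replaced by any matrix $\mat J$ satisfying \eqref{eq:jacobian_invariant}. This prepares the corollary: one checks that $\mat J = \mat L\mat J^V\mat P$ satisfies $\state W^T\mat J = \mat 0$ for mass and $\varrho\theta$. This holds because the columns of $\mat J^V$ in the $\varrho$ and $\varrho\theta$ rows, weighted by $J_k\omega_k$, telescope thanks to the SBP property and the single-valued conservative interface fluxes \eqref{eq:vertical_numflux}, together with the reflecting-wall boundaries.
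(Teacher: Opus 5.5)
Your proposal is correct and follows the paper's proof: you induct over the stages to show $\state W^T\state K_i = 0$ by multiplying the stage equation by $\state W^T$, using $\state W^T\mat J = \mat 0$, the fact that $\state W^T\state R$ vanishes for every argument, and the induction hypothesis on the $c_{ij}$ terms, and then you conclude with the update formula. Your side remarks also match the paper: exact solvability is needed, and the corollary's hypothesis is checked through the SBP column-sum property and single-valued interface fluxes, as in Lemma~\ref{lem:vertical_invariants}.
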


\begin{proof}
We show by induction over the stages that $\state W^T\state K_i = 0$ for all $i$.
Multiplying the stage equation \eqref{eq:rosenbrockW} from the left by $\state W^T$ and using \eqref{eq:jacobian_invariant} gives
\begin{equation*}
	\frac{1}{\hat\gamma\Delta t}\,\state W^T\state K_i
	= \state W^T\state R\Big(\state U^n + \textstyle\sum_{j<i} \tilde{a}_{ij}\state K_j\Big)
	+ \frac{1}{\Delta t}\sum_{j<i} c_{ij}\,\state W^T\state K_j.
\end{equation*}
The first term on the right-hand side vanishes by \eqref{eq:linear_invariant}, irrespective of the argument of $\state R$.
For $i = 1$, the sum is empty, so $\state W^T\state K_1 = 0$.
If $\state W^T\state K_j = 0$ for all $j < i$, the sum vanishes, and hence $\state W^T\state K_i = 0$.
Finally, \eqref{eq:Rosenbrock_update} yields $\state W^T\state U^{n+1} = \state W^T\state U^n + \sum_i m_i\,\state W^T\state K_i = \state W^T\state U^n$.
\end{proof}

Condition \eqref{eq:jacobian_invariant} is automatically satisfied by the exact Jacobian, since differentiating \eqref{eq:linear_invariant} with respect to $\state U$ yields $\state W^T\,\partial\state R/\partial\state U = \mat 0$.
For a W-method, however, it is a property of the chosen approximation {and metric terms} and has to be verified; if it is violated, $\state W^T\state K_i = \hat\gamma\Delta t\,\state W^T\mat J\,\state K_i \neq 0$ in general, and the invariant is not preserved.
We now show that the approximate Jacobian of Section~\ref{sec:jacobian_hevi} satisfies \eqref{eq:jacobian_invariant} for the total mass and the total potential-temperature density.

\begin{lemma}\label{lem:vertical_invariants}
Consider one vertical column with reflecting walls at the bottom and top, at which the numerical fluxes of $\varrho$ and $\varrho\theta$ vanish.
Let $\state W_\varrho$ be the vector with entries $J^{(e)}_k\omega_k$ at the density degrees of freedom of the column, ordered as in \eqref{eq:orderingJac}, and zeros elsewhere, and define $\state W_{\varrho\theta}$ analogously for the potential-temperature density.
Then
\begin{equation}
	\state W_\varrho^T\mat J^V = \mat 0, \qquad \state W_{\varrho\theta}^T\mat J^V = \mat 0 .
\end{equation}
\end{lemma}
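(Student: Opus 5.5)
Rather than working with the explicit blocks \eqref{eq:Jac_ee}--\eqref{eq:Jac_ep1}, we will argue at the level of the linearized residual \eqref{eq:vertical_residual}. The claim $\state W_\varrho^T\mat J^V=\mat 0$ says that $\state W_\varrho^T\,\delta\state R^V=0$ for every perturbation $\delta\state U^V$. Here $\delta\state R^V = \mat J^V\delta\state U^V$ is the linearization of \eqref{eq:vertical_residual} with $\theta_f$, $\avg{\theta}$ and $c_0$ frozen. It therefore suffices to show that the weighted sum $\sum_e\sum_k J^{(e)}_k\omega_k\,\delta R^{\varrho,(e)}_k$ vanishes identically, and likewise for $\varrho\theta$. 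Note that $\mat J^V$ acts on all three variables, but only the $\varrho$ and $\varrho\theta$ rows are weighted. The gravity term $-\tilde g\varrho$ sits in the $\varrho\tilde w$ row and does not enter.

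\textbf{Step 1 (element volume terms).} Multiplying the $\varrho$ row by $J^{(e)}_k\omega_k$ cancels the factor $1/(J^{(e)}_k\omega_k)$. The volume contribution of element $e$ becomes
\[
-\sum_k\sum_m \hat D_{km}\,n^{(e)}_m\,\delta(\varrho\tilde w)^{(e)}_m .
\]
Since $\hat{\mat D}=\mat Q-\mat B$ and $\mat 1^T\mat Q = \mat 1^T\mat B - \mat 1^T\mat Q^T = \mat 1^T\mat B$ (because $\mat Q\mat 1=\mat M\mat D\mat 1=\mat 0$), we get $\mat 1^T\hat{\mat D}=\mat 0$. Hence the volume part vanishes for each element, for any nodal vector. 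The same holds for the $\varrho\theta$ row, where the vector is $n^{(e)}_m\theta_{f,m}^{(e)}\delta(\varrho\tilde w)^{(e)}_m$ and $\theta_f$ is frozen. This is exactly where the strong form with $\hat{\mat D}$ is used: the flux is not differentiated through any nonlinear two-point function, so linearity in $\delta\state U$ is immediate.

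\textbf{Step 2 (surface terms telescope).} What remains per element is $-\big(\delta\numflux f_{e+\frac12}-\delta\numflux f_{e-\frac12}\big)$. Summing over $e$ gives a telescoping sum, because the numerical flux \eqref{eq:vertical_numflux} at an interior interface is single-valued. The same expression $n_{e+\frac12}\big(\avg{\cdot}-\tfrac{1}{2c_0}\jump{p}\big)$ enters element $e$ with $+$ at node $\polydeg$ and element $e+1$ with $-$ at node $0$. This includes the frozen $\avg{\theta_f}$ factor in the $\varrho\theta$ dissipation, which the paper chose precisely for this reason. Its linearization $\delta\numflux f$ (with $\delta p=\alpha\,\delta(\varrho\theta)$) is therefore also a single common value. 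Only the bottom and top boundary fluxes survive, and these vanish by the reflecting-wall assumption. Their linearizations vanish too, since the boundary fluxes of $\varrho$ and $\varrho\theta$ are identically zero as functions of the interior trace.

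\textbf{Expected obstacle.} The main point to check carefully is consistency between the algebraic blocks and this residual argument. In particular, the interface coupling blocks \eqref{eq:Jac_em1}--\eqref{eq:Jac_ep1} must carry exactly the same (frozen) coefficients on both sides of an interface. For example, $\theta^{(e-1)}_{\polydeg}$ in the central part must match the $\avg{\theta_f\varrho\tilde w}$ convention. Otherwise the telescoping fails. We would verify this directly: take a column-vector test by summing the weighted rows of $\mat J^{(e,e)}$, $\mat J^{(e+1,e)}$ and $\mat J^{(e-1,e)}$ against each unknown of element $e$. Using $\mat 1^T\mat S=\mat 1^T\mat B$ (since $\mat S=2\mat Q-\mat B$), the $\varrho\tilde w$ column then reduces to $\pm\tfrac12 n_{e\pm\frac12}$ corner terms. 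These cancel against the neighbor blocks, and at the walls they cancel against the boundary corner entries. The $\varrho\theta$ column works the same way through the $\mat B_n\Lambda(\stateG\alpha)$ and $\mat B_{n\theta}\Lambda(\stateG\alpha)$ terms. Combined with Lemma~\ref{lem:rosenbrock_invariants} and $\mat J=\mat L\mat J^V\mat P$ (the $\varrho,\varrho\theta$ rows of $\mat L$ are identities), this yields the conservation result.
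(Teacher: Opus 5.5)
Your proposal is correct and is essentially the paper's proof. Both rest on the same three facts: the columns of $\hat{\mat D}=\mat Q-\mat B$ sum to zero, the single-valued interface fluxes telescope (including the $\varrho\theta$ dissipation built on $\avg{\theta}$), and the wall fluxes vanish by assumption. The only difference is cosmetic: the paper shows $\state W_\varrho^T\state R^V(\state U^V)=0$ for all $\state U^V$ with frozen coefficients and then differentiates, whereas you run the same argument on the linearized residual directly, and your block-level check of \eqref{eq:Jac_em1}--\eqref{eq:Jac_ep1} is a valid but optional extra.
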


\begin{proof}
Since $\mat J^V = \partial\state R^V/\partial\state U^V$ with the frozen coefficients of Section~\ref{sec:jacobian_hevi}, it suffices to show that $\state W_\varrho^T\state R^V(\state U^V) = 0$ and $\state W_{\varrho\theta}^T\state R^V(\state U^V) = 0$ for all $\state U^V$.
We consider the density; the argument for the potential-temperature density is identical.
Denote by $\contSt f^{\varrho}_m = n^{(e)}_m(\varrho\tilde w)^{(e)}_m$ the nodal flux of the density equation.
By \eqref{eq:vertical_residual},
\begin{equation*}
	\state W_\varrho^T\state R^V
	= \sum_{e}\sum_{k=0}^{\polydeg} J^{(e)}_k\omega_k\, R^{\varrho,(e)}_k
	= -\sum_{e}\left[\sum_{k=0}^{\polydeg}\sum_{m=0}^{\polydeg} \hat D_{km}\,\contSt f^{\varrho}_m + \numflux f^{\varrho}_{e+\frac12} - \numflux f^{\varrho}_{e-\frac12}\right].
\end{equation*}
The volume term vanishes, because the columns of $\hat{\mat D} = \mat Q - \mat B$ sum to zero: since the differentiation matrix is exact for constants, $\mat Q\state 1 = \mat M\mat D\state 1 = \state 0$, and the SBP property $\mat Q + \mat Q^T = \mat B$ yields $\state 1^T\mat Q = (\mat Q^T\state 1)^T = (\mat B\state 1 - \mat Q\state 1)^T = \state 1^T\mat B$, i.e., $\state 1^T\hat{\mat D} = \state 0^T$.
The surface terms telescope, since the surface numerical flux and the contravariant metric terms at each interior interface are single-valued, leaving only the fluxes at the bottom and top walls, which vanish by assumption.
Hence $\state W_\varrho^T\state R^V = 0$ for all $\state U^V$, and differentiation yields $\state W_\varrho^T\mat J^V = \mat 0$.
For the potential-temperature density, the same argument applies with the nodal flux $n^{(e)}_m\theta^{(e)}_{f,m}(\varrho\tilde w)^{(e)}_m$; the numerical flux $\numflux f^{\varrho\theta}$ is single-valued because its dissipative term uses the interface average $\avg{\theta}$ rather than the element-local values of $\theta_f$.
\end{proof}

\begin{remark}
    Note that the conservation of linear invariants here is guaranteed by the particular choice of the discrete metric terms of the Jacobian approximation, which is derived from the DGSEM semidiscretization.
    In general, a consistent and locally conservative discretization of the Jacobian does not guarantee the conservation of linear invariants.
\end{remark}

\begin{corollary}\label{cor:hevi_invariants}
Assume that the split-form DGSEM discretization conserves the total mass and the total potential-temperature density, i.e., that the corresponding vectors $\state W$ define linear invariants of $\dot{\state U} = \state R(\state U)$.
Then the HEVI Rosenbrock-W scheme with the approximate Jacobian $\mat J = \mat L\,\mat J^V\mat P$ of Section~\ref{sec:jacobian_hevi} preserves both quantities, provided that the column systems \eqref{eq:vertical_stage} are solved exactly.
\end{corollary}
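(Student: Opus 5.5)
The plan is to reduce the corollary to Lemma~\ref{lem:rosenbrock_invariants}. By assumption, the global vectors $\state W_\varrho$ and $\state W_{\varrho\theta}$ define linear invariants of $\dot{\state U} = \state R(\state U)$. These vectors carry $J_{ijk}\omega_{ijk}$ at the density, respectively potential-temperature, degrees of freedom and zeros elsewhere. It therefore suffices to verify condition \eqref{eq:jacobian_invariant} for $\mat J = \mat L\,\mat J^V\mat P$, i.e., $\state W^T\mat L\,\mat J^V\mat P = \mat 0$.

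The first step is to push $\state W$ through the lift $\mat L$. By \eqref{eq:restriction_lift}, $\mat L_k$ maps the first and second components of the vertical state (in the ordering $(\varrho,\varrho\theta,\varrho\tilde w)$ used for the $3\times 5$ and $5\times 3$ operators) onto the $\varrho$ and $\varrho\theta$ entries of the full state. The momentum increment is sent only into the momentum components, which $\state W$ does not see. Hence $\state W_\varrho^T\mat L$ is exactly the column-ordered vector with entries $J_{ijk}\omega_{ijk}$ at the vertical density degrees of freedom and zeros at $\varrho\tilde w$, $\varrho\theta$; analogously for $\varrho\theta$. Because $\mat L$ is block diagonal over columns, this vector splits into one piece per column.

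The second step handles the weights. Per column, the weights are $\omega_i\omega_j\,J^{(e)}_k\omega_k$, whereas Lemma~\ref{lem:vertical_invariants} uses $J^{(e)}_k\omega_k$. I will note that the horizontal factor $\omega_i\omega_j$ is a constant scalar on each column and factor it out. Here $J^{(e)}_k$ in the vertical residual \eqref{eq:vertical_residual} is the full mapping Jacobian $J_{ijk}$, since it was obtained by dropping horizontal indices from \eqref{eq:DGSEM_3D_std}. Then Lemma~\ref{lem:vertical_invariants}, applied column by column with the reflecting-wall assumption at bottom and top, gives $\state W^T\mat L\,\mat J^V = \mat 0$, and hence $\state W^T\mat J = \mat 0$ after right-multiplication by $\mat P$.

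Finally, Lemma~\ref{lem:rosenbrock_invariants} yields $\state W^T\state U^{n+1} = \state W^T\state U^n$ for both vectors. It requires the stage equations \eqref{eq:rosenbrockW} to be solved exactly. I will check that solving the column systems \eqref{eq:vertical_stage} exactly and lifting via \eqref{eq:vertical_lift} gives the exact solution of \eqref{eq:rosenbrockW}; this follows from $\mat P\mat L = \mat I$ as derived in Section~\ref{sec:jacobian_hevi}.

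The main obstacle is the bookkeeping in the second step. One must make sure that the column boundaries are indeed walls with vanishing $\varrho$ and $\varrho\theta$ fluxes in the linearized Jacobian. One must also make sure that the mapping Jacobian appearing in $\mat J^V$ coincides with the one in the global conservation weights, so that no metric-dependent mismatch remains. This is the point stressed in the remark following Lemma~\ref{lem:vertical_invariants}.
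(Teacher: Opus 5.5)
Your proposal is correct and follows the paper's proof essentially step for step. You identify $\state W^T\mat L$ with the column weights of Lemma~\ref{lem:vertical_invariants} scaled by the horizontal quadrature weights, apply that lemma column by column to obtain $\state W^T\mat L\,\mat J^V\mat P = \mat 0$, and conclude with Lemma~\ref{lem:rosenbrock_invariants}, while also making explicit two points the paper leaves tacit: that exact column solves plus the lift \eqref{eq:vertical_lift} give an exact solution of \eqref{eq:rosenbrockW} via $\mat P\mat L = \mat I$, and that the reflecting-wall hypothesis of Lemma~\ref{lem:vertical_invariants} is implicitly needed.
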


\begin{proof}
Let $\state W$ define the total mass, and let $\state W_V$ collect the corresponding weights of all columns in the ordering of $\mat J^V$, i.e., the vectors $\state W_\varrho$ of Lemma~\ref{lem:vertical_invariants}, scaled by the horizontal quadrature weights of each column.
The lift $\mat L$ of \eqref{eq:restriction_lift} maps the density component of the vertical subsystem to the density component of the full state without modification, and $\state W$ vanishes at all momentum degrees of freedom.
Hence $\state W^T\mat L = \state W_V^T$, and Lemma~\ref{lem:vertical_invariants}, applied column by column, yields
\begin{equation*}
	\state W^T\mat J = \state W^T\mat L\,\mat J^V\mat P = \state W_V^T\mat J^V\mat P = \mat 0 .
\end{equation*}
Lemma~\ref{lem:rosenbrock_invariants} then gives the result.
The same argument applies to the total potential-temperature density.
\end{proof}

In finite-precision arithmetic, the invariants are preserved up to round-off errors, since the column systems are solved with direct methods.
Note that the argument of Lemma~\ref{lem:vertical_invariants} relies on the fact that $\mat J^V$ is the exact Jacobian of a conservative (linearized) vertical residual.
If the vertical subsystem is formulated in other variables through a node-wise change of variables, as discussed in Remark~\ref{rem:other_formulations}, the resulting approximate Jacobian combines the rows of $\mat J^V$ with state-dependent coefficients and does not satisfy \eqref{eq:jacobian_invariant} for the thermodynamic variable of the dynamical core in general.

\end{document}